\documentclass{article}

\usepackage{latexsym,amsmath,amsthm,amssymb}
\usepackage{mathrsfs,wasysym,graphicx,lscape}
\usepackage{esint}
\usepackage{color}
\begin{document}

\newtheorem{theorem}{\indent Theorem}[section]
\newtheorem{proposition}[theorem]{\indent Proposition}
\newtheorem{definition}[theorem]{\indent Definition}
\newtheorem{lemma}[theorem]{\indent Lemma}
\newtheorem{remark}[theorem]{\indent Remark}
\newtheorem{corollary}[theorem]{\indent Corollary}

\title{\textbf{Uniform Exponential Attractors for Non-Autonomous Dynamical Systems and an Application to the damped hyperbolic equation with sub-quintic growth}}

\author{ Qingquan Chang$^{1}$\footnote{changqingquan@lzu.edu.cn} \and Dandan Li$^2$ \footnote{Corresponding Author: dandanli@gbu.edu.cn} }
\date{}
\maketitle

\begin{center} 
       {\small 1.  School of mathematics and statistics, Lanzhou  University, Lanzhou, Gansu, 730000, P.R. China}\\
      {\small 2. Department of Mathematics, School of Science, Great Bay University, Dongguan, Guangdong, 523000, P.R. China} 
\end{center}

\renewcommand{\theequation}{\arabic{section}.\arabic{equation}}
\numberwithin{equation}{section}

\begin{abstract}
In this paper, we develop an abstract framework for the theory of uniform exponential attractors in non-autonomous dynamical systems. Our construction employs time-dependent absorbing sets while ensuring exponential attraction among the constructed sets. Moreover, we show that the resulting exponential attractors are uniform—that is, they remain stable under perturbations.
As an application of this abstract theory, we study the weakly damped  hyperbolic equation with sub-quintic nonlinearity. Using the proposed approach, we construct exponential attractors for the process generated by the Shatah–Struwe solution, which satisfies both energy and Strichartz estimates. Furthermore, we establish the Hölder continuity of these exponential attractors with respect to the external forcing term.
\end{abstract}
{\bf Keywords}: Uniformity exponential attractors; Non-autonomous dynamical systems; Weakly damped  hyperbolic equation; sub-quintic nonlinearity; Shatah-Struwe solution.

\footnote[0]{\hspace*{-7.4mm}
AMS Subject Classification: 35B40; 35B45; 37L15; 37L30\\

}

\section{Introduction}
\noindent

The evolution differential equations are often used to  describe and simulate concrete phenomenons which arise in mechanics and physics. But many equations cannot be solved analytically. The absence of analytical solutions for evolution equations makes it impossible to theoretically explain new phenomena appearing in new areas of science and technology.
{ By borrowing ideas and mathematical tools from dynamical systems to conduct qualitative analysis of partial differential equations, the discipline of infinite-dimensional dynamical systems emerged.}
Infinite-dimensional dynamical systems, with asymptotic analysis as the primary method, could provide proper guidelines for numerical simulation and analysis in the nonlinear evolution systems.   


According to the theory of infinite-dimensional dynamical systems, the long-term behavior of a dissipative dynamical system can be characterized by the presence of a complex attractor toward which all trajectories converge as \( t \to \infty \) (see \cite{Babin1992,Temam1998}). This attractor may exhibit a fractal structure, such as a Cantor set or a product of a Cantor set with an interval. It thus serves as the fundamental mathematical model for describing the observed nonstationary flow.
Global attractor is the first and the most popular concept to describe the long-time behavior of infinite-dimensional dynamical systems.
 However, the global attractor is only valid for autonomous systems, and at the same time it is very sensitive to disturbances, and its attracting speed may be particularly slow. These two defects make it difficult for the global attractor to be applied in detail mathematical physics problems. In 1988, Foias, Sell and Temam constructed inertial manifolds to achieve long-term behavior research and dimensionality reduction in \cite{Foias1988}. However, there are extremely harsh conditions for obtaining an inertial manifold, because its existence usually depends on the spatial dimension, the spectral gap condition of the operator, and even the shape of the domain. For many important equations, such as the two-dimensional Navier-Stokes equation, the existence of the inertial manifold is still an open problem, see \cite{Constantin1989a,Constantin1989b,Mallet-Paret1988,zelik2004}.

To overcome the shortcomings of global attractors, Eden \emph{et al} proposed the concept of exponential attractor in \cite{Eden1994} as an intermediate object between the notion of the global attractor and the inertial manifold. The exponential attractor is a compact set with positive invariance, finite fractal dimension, and exponential attraction. The exponential attraction property of exponential attractors makes them more stable under perturbations and numerical approximations compared to global attractors. In order to derive the finite fractal dimension and then the exponential attractor, it requires the systems must satisfy a strong squeezing property, i.e. both the contractive and smooth components should be controlled by the norm of the phase space. In \cite{Chueshov2008}, Chueshov and Lasiecka generalized the squeezing properties given in \cite{Eden1994} and \cite{Ladyzhenskaya1982} by taking virtue of a precompact pseudometric, which further evolves into the quasi-stable theory in \cite{Chueshov2015}.
The drawback of exponential attractors with the desired properties is that they are not unique, and their construction can be influenced by the specific objectives of the study (\cite{Miranville1998}). The exponential attractors have been considered for a class of equations arising from mechanics and physics \cite{Efendiev2003,Zelik2005,Langa2010}, the existence of exponential attractors is as general as that of global attractors.



In this paper, we focus on the stability of non-autonomous dynamical systems under perturbations. The non-autonomous dynamical systems strictly depend on the initial moment and current moment of the system.

Generally speaking, exponential attractors possess better stability than global attractors. In most cases, global attractors can only be shown to be upper semi-continuous under perturbations (\cite{Hale11988}). However, for certain classes of dynamical systems, such as gradient systems, the global attractor may also exhibit lower semi-continuity with respect to perturbations (\cite{Hale1988}). Due to the non-uniqueness of exponential attractors, proving their stability requires careful attention to the specifics of their construction. Furthermore, the study of exponential attractors in non-autonomous dynamical systems presents additional challenges: their existence and structure become more complicated, particularly since exponential attractors are only positively invariant (\cite{Efendiev2003}).

In \cite{Zelik2005}, Efendiev, Zelik, and Miranville proposed a construction of exponential attractors for autonomous dynamical systems based on smoothness properties, which was later extended to non-autonomous systems. Within this framework, they also established the stability of exponential attractors under perturbations.
In \cite{Langa2010}, the conditions from \cite{Zelik2005} were slightly relaxed, leading to a constructive proof of the existence of exponential attractors for non-autonomous dynamical systems. 
However, the absorbing set used in this construction is time-independent, which limits its broader applicability to non-autonomous systems. 
In such systems, it is more natural to consider families of time-dependent absorbing sets rather than a single fixed set.
 Carvalho and Sonner generalized the results to pullback asymptotically compact process in \cite{Carvalho2013a}. They considered a family of time-dependent absorbing sets, which can even grow in the past instead of a fixed bounded absorbing set.
{ However, most existing studies on this topic have primarily focused on the exponential attraction property within bounded domains. In the present work, we attempt to explore a broader context by proposing a new construction of open coverings. It is observed that the set of centers of these covering balls also appears to exhibit exponential pullback attraction. This construction may offer some insights into the geometric characteristics of solutions to dissipative equations, as well as the role of dissipativity in preserving certain invariance properties. }



{In this paper, we advance the existing theory by introducing a novel approach to constructing exponential attractors for non-autonomous dynamical systems. By relaxing the classical assumptions presented in \cite{Zelik2005}, we develop a construction framework based on time-dependent absorbing sets—a generalization that offers broader applicability. A key contribution of our method is the establishment of stability for these exponential attractors under external perturbations, a result not generally available in earlier works. Specifically, we prove the existence of uniform exponential attractors for a class of perturbed systems satisfying mild structural conditions, thereby extending the robustness and utility of exponential attractors in non-autonomous settings. }
As an application of the conclusions in this paper, we study the stability of the dynamical behavior for the subquintic weakly damped  hyperbolic equation defined on a bounded domain in $\mathbb{R}^3$ under perturbations.
The weakly damped  hyperbolic equation is one of the most important models in the theory of infinite-dimensional dynamical systems and is often regarded as a benchmark for validating fundamental theoretical frameworks.  
Additionally, it serves as a mathematical model for a variety of physical phenomena, including descriptions based on the Maxwell–Cattaneo law, relativistic quantum mechanics, and electric conduction in Josephson junctions (see, e.g., \cite{Gallay1998}).

 

 The global well-posedness of weak solutions for the weakly damped  hyperbolic equation on a bounded domain in $\mathbb{R}^3$ is known only for nonlinearities $f$ with sub-cubic or cubic growth rates.
 For the weakly damped  hyperbolic equation with sub-quintic growth nonlinearity,  the equation does not satisfy the energy equality, making it impossible to establish the uniqueness of the weak solution. 
Building upon recent advances in Strichartz estimates \cite{Blair2009}, Kalantarov, Savostianov, and Zelik \cite{Kalantarov2016} extended the analysis to the quintic case. They introduced a class of solutions endowed with enhanced space-time regularity, termed Shatah–Struwe solutions (see the definition in \cite{Kalantarov2016}).
 In this paper, we study the long-term behavior of the non-autonomous dynamical system generated by the weakly damped  hyperbolic equation with sub-quintic nonlinearity, in the sense of the Shatah-Struwe solution. First, we construct exponential attractors using the approach proposed in this paper, and then we prove the stability of these exponential attractors with respect to the external forcing term.

The following is an outline of this paper. In Section 2, we recall the concepts of a non-autonomous process and pullback exponential attractors. An abstract theorem on the existence and Hölder continuity of pullback exponential attractors is also established in this section. Finally, as an application, we establish the robustness of exponential attractors for the non-autonomous damped  hyperbolic equation with sub-quintic nonlinearity and obtain the corresponding estimate for the symmetric metric.

\section{The abstract setting for non-autonomous processes}
\noindent

\subsection{Preliminaries}

We start with some preliminary notions which will used in this paper.

  \begin{definition}[\cite{CarvalhoLanga2013}]
    Let $X$ be a phase space, $\mathcal{L}(X)$ be the set of all continuous transformations from $X$ into itself. A process in $X$ is a family of maps $\{S(t,s):t>s\}$ in $\mathcal{L}(X)$ such that properties
    \begin{enumerate}
      \item $S(t,t)=I,$ for all $t\in\mathbb{R},$
      \item $S(t,s)=S(t,\tau)S(\tau,s),$ for all $t\geq\tau\geq s,$
      \item $(t,s,x)\mapsto S(t,s)x$ is continuous, $t\geq s,$ $x\in X.$
    \end{enumerate}
  \end{definition}

We give the dentition of attraction below.
\begin{definition}[Hausdorff semidistance \cite{Temam1998}]
    Let $X$ be a Banach space, and $A$ and $B$ are some some subset of the Banach space $X$. Denote by $dist_H(A,B)$ the Haudorff semidistance between $A$ and $B$, defined as
    \begin{equation}
        dist_H(A,B)=\sup_{a\in A}\inf_{b\in B}d(a,b)=\sup_{a\in A}\inf_{b\in B}\|a-b\|_X.
    \end{equation}
\end{definition}
Note that $dist_H(A,B)=0$ implies only that $\overline{A}\subseteq\overline{B}$, where $\overline{M}$ denotes the closure of $M$ in $X$; we only have $dist_H(A,B)=0$ implying $A\subset B$ provided that $B$ is closed. In this paper we will also use the symmetric Hausdorff metric:
\begin{equation*}
dist^{symm}_H(A,B)=\max(dist_H(A,B),dist_H(B,A)).
\end{equation*}

\begin{definition}[Pullback attraction \cite{Temam1998}]
    Let $B$ and $C$ be subsets of $X$. We say that $B$ pullback attracts $C$ under a process $\{S(t,s):t>s\}$ if 
    \begin{equation*}
       \lim_{t-s\to\infty} dist_H(S(t,s)C,B) = 0.
    \end{equation*}
\end{definition}

In the theory of infinite dynamical systems, we often consider the fractal dimension as an index for characterizing sets by quantifying their complexity as a ratio of the change in detail to the change in scale.

\begin{definition}[Fractal dimension \cite{Temam1998}]
    Let $X$ be a Banach space and $B$ be a subset of $X$. The fractal dimension of $B$, which is also called the capacity of $B$, is the number 
    \begin{equation*}
        d_F(B) = \limsup_{\epsilon\to 0}\frac{\log N_B(\epsilon)}{\log 1/\epsilon},
    \end{equation*}
    where $ N_B(\epsilon)$, $\epsilon>0$, denote the minimal number of balls of $X$ of radius $\epsilon$ which is necessary to cover $B$.
\end{definition}

Exponential attractors are mathematical constructs that play a key role in the analysis of dynamical systems, especially those of infinite dimension arising from partial differential equations. They are characterized by several important properties, such as finite dimensionality and the exponential rate at which trajectories converge toward them.

  \begin{definition}[Pullback exponential attractors \cite{Carvalho2013a}]
    Let $\{S(t,s)|t\geq s\}$ be an evolution process in a metric space $X$. We call the family $\mathcal{M}=\{\mathcal{M}(t)|t\in\mathbb{R}\}$ a pullback exponential attractor for evolution process $\{S(t,s)|t\geq s\}$ in $X$ if
    \begin{enumerate}
      \item the subsets $\mathcal{M}(t)\subset X$ are non-empty and compact in $X$ for all $t\in\mathbb{R}$;
      \item the family is positive semi-invariant, that is
            \begin{equation*}
              \mathcal{M}(s)\subset\mathcal{M}(t)~ \text{for all}~~t\geq s;
            \end{equation*}
      \item the fractal dimension in $X$ of the section $\mathcal{M}(t), t\in\mathbb{R},$ is uniformly bounded;
      \item the family $\{\mathcal{M}(t)|t\in\mathbb{R}\}$; exponentially pullback attracts bounded subsets of $X$; that is, there exists a positive constant $\omega>0$ such that for every bounded subset $D\subset X$ and $t\in\mathbb{R}$
          \begin{equation}
            \lim_{s\to\infty} e^{\omega s}\mathrm{dist}_H(S(t,t-s)D,\mathcal{M}(t))=0.
          \end{equation}
    \end{enumerate}
  \end{definition}

 Exponential attractors are stable under small perturbations. This property is useful in applications where the system may be subject to external influences or noise, as the attractor will still accurately represent the long-term behavior of the system despite these perturbations. This is the primary reason for studying the stability of exponential attractors in this paper.

\subsection{Constructive methods for exponential attractors in non-autonomous dynamical system}
\noindent

Exponential attractors provide a powerful tool for understanding the long-term dynamics of complex systems, especially in cases where global attractors are not easily obtainable or are infinite-dimensional.
 Exponential attractors can often be constructed explicitly using certain mathematical techniques. This allows researchers to directly compute or approximate the attractor for specific systems, especially to prove the stability of the dynamical system under perturbation.

In this section, we construct a family of exponential attractors for an abstract operator generated by the  non-autonomous dynamical system. This non-autonomous operator, often referred to as a dynamical process $U(t,\tau)$, depends on two parameters $t,\tau \in \mathbb{R}$ (or $t,\tau \in \mathbb{Z}$ for discrete times), with $t \geq \tau$. When the operator is Hölder continuous with respect to the perturbation, it can also be shown that the exponential attractor is Hölder continuous with respect to the perturbation.

The admissible class of processes considered in this paper is defined as follows:

\begin{definition}\label{def1}
Let $H_1$ and $H$ be two Banach spaces such that $H_1$ is compactly embedded into $H$ and let $B$ be a bounded subset of $H_1$. An operator $U:H_1\to H_1$ belongs to the class $\mathbb{U}_{\mu,K}(B)$ for some positive constants $\mu$, $K$ and two operator $S$ and $C$ such that if for every point $h_1,h_2\in B$, we have
\begin{align}
       &U=S+C, \nonumber \\
   &\|Sh_1-Sh_2\|_{H_1}  \leq  \frac{1-\mu}{2}\|h_1-h_2\|_{H_1},\nonumber \\
   &\|Ch_1-Ch_2\|_{H_1}  \leq  K \|h_1-h_2\|_{H}.
\end{align}\label{ass1}
\end{definition}

A dynamical process $U$ in the phase space $\Phi$ is a two-parametrical family of maps $\{U(l,m):~ \Phi\to\Phi,~ l,m\in\mathbb{Z},~l\geq m\}$ such that:
\begin{equation*}
  U(l,k)\cdot U(k,m) = U(l,m),~U(m,m)=Id, l,m,k\in\mathbb{Z}, l\geq k\geq m.
\end{equation*}
We set $U(n):=U(n+1,n)$.

The main result of this section is the following theorem, which gives a H\"{o}lder continuous family of exponential attractors associated with the maps $U\in\mathbb{U}_{\mu,K}(B)$.

\begin{theorem}\label{th1}
Let the spaces $H_1$ and $H$ be defined as in Definition \ref{def1}. Assume that there exist bounded sets $\{B(n)\}_{n\in\mathbb{Z}}$ in $H_1$ and every dynamical process $U$ on $H_1$ with $U(n)\in\mathbb{U}_{\mu,K}(B(n))$ satisfies $U(n)B(n)\subset B(n+1)$. Moreover, we assume that the diameter of the family of bounded sets $\{B(n)\}_{n\in\mathbb{Z}}$ growths at most sub-exponentially in the past.
Then there exist pullback exponential attractors $n\to M_U(n),~n\in\mathbb{Z}$ which satisfies the following properties:\\
(1) The pullback exponential attractor $M_U(n)\subset B(n)$ for every $n\in\mathbb{Z}$ and its fractal dimension is finite, i.e.,
\begin{equation}\label{con1}
  \dim_F(M_U(n),H_1)\leq C,
\end{equation}
where the constant $C$ depends only on $H,~H_1,~\mu,~ K$ and is independent of $n$.\\
(2) The family $\{M_U(n)\}$ is semi-invariant with respect to $U$, i.e.,
\begin{equation}\label{con2}
  U(k,m)M_U(m)\subset M_U(k),
\end{equation}
for all $k,m\in\mathbb{Z}$, $k\geq m$.\\
(3) This family of sets ${M_U(n)}{n\in\mathbb{Z}}$ enjoys a uniform exponential attraction property of the form
\begin{equation}\label{con3}
  \mathrm{dist}_{H_1}(U(n+k,n)B(n),M_U(n+k)) \leq C e^{-\alpha k},
\end{equation}
where the constants $C$ and $\alpha$ depend only on $H,~H_1,~\mu,~ K$ and are independent of $n\in\mathbb{Z}$ and $k\in\mathbb{N}$.\\
(4) The map $U \to M_U(n)$ is uniformly Hölder continuous in the following sense: for any two dynamical processes $U_1$ and $U_2$ satisfying $U_1(n), U_2(n) \in \mathbb{S}{\mu,K}(B(n))$, we have
\begin{equation}\label{con4}
  \mathrm{dist}_{H_1}^{\mathrm{symm}}(M_{U_1}(n),M_{U_2}(n))\leq C \sup_{l\in(-\infty,n)}\{e^{-\beta(n-l)}\|U_1(n-l)-U_2(n-l)\|_{\mathbb{U}}^\vartheta\},
\end{equation}
where the positive constants $C$, $\beta$, and $\vartheta$ depend only on $H$, $H_1$, $\mu$, and $K$, and are independent of $n$ and the particular choices of $U_1$ and $U_2$. Here,
$$\|U_1(n-l)-U_2(n-l)\|_{\mathbb{U}}:= \sup_{u\in B(n-l)}\|U_1(n-l)u-U_2(n-l)u\|_{H_1}.$$
\end{theorem}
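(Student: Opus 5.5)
The plan is to follow the Efendiev--Miranville--Zelik covering construction, adapted to the non-autonomous discrete setting with time-dependent absorbing sets $B(n)$.

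\textbf{Covering lemma.} Fix $U(n)\in\mathbb{U}_{\mu,K}(B(n))$. Because $H_1\hookrightarrow H$ is compact, the $H_1$-ball of radius $R$ can be covered by $N=N_{\kappa}$ balls of $H$ of radius $\kappa R$, with $\kappa=\mu/(4K)$ and $N$ depending only on $H,H_1,\mu,K$. Take an $H_1$-ball $B_{H_1}(x,R)\cap B(n)$ and split $U=S+C$. The map $S$ contracts by $(1-\mu)/2$. The map $C$ sends $H$-balls of radius $\kappa R$ into $H_1$-balls of radius $K\kappa R$. Together these show that $U(n)(B_{H_1}(x,R)\cap B(n))$ is covered by $N$ balls of $H_1$ of radius $\frac{1-\mu}{2}R+K\kappa R\le \frac{1}{2}(1-\mu/2)R=:\theta R$. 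We may take the centres in $U(n)B(n)\subset B(n+1)$, at the cost of doubling the radius, so the radius becomes $2\theta R<R$.

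\textbf{Construction.} Iterate the covering lemma starting from time $n-k$. Let $R_0(m)=\operatorname{diam}B(m)$; sub-exponential growth means that for every $\epsilon>0$, $R_0(n-k)\le C_\epsilon e^{\epsilon k}$. We obtain sets $V_k(n)\subset U(n,n-k)B(n-k)\subset B(n)$ of cardinality at most $N^k$ such that $U(n,n-k)B(n-k)$ lies in the $(2\theta)^kR_0(n-k)$-neighbourhood of $V_k(n)$. We choose the sets $V_k$ so that they form a tree: every point of $V_{k+1}(n+1)$ lies close to $U(n)V_k(n)$, as in Zelik's construction. Define $E(n)=\bigcup_{k\ge0}V_k(n)$ and $M_U(n)=\overline{\bigcup_{j\ge0}U(n,n-j)E(n-j)}$.

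We then check the four properties. Semi-invariance (2) holds by construction, once we use continuity of $U$ and closedness. Exponential attraction (3) follows from $(2\theta)^kC_\epsilon e^{\epsilon k}\le Ce^{-\alpha k}$ once $\epsilon<-\log(2\theta)$. For the dimension bound (1), cover $M_U(n)$ at scale $r_k=(2\theta)^kR_0e^{\epsilon k}$ using the points of $\bigcup_{j\le k}V_j$ together with their $U$-images, which gives at most $kN^k$ points. Hence $\dim_F\le \log N/(-\log(2\theta)-\epsilon)$, a bound independent of $n$. Compactness follows from the same total boundedness.

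\textbf{H\"older continuity (4): the main obstacle.} The construction must be made uniform in $U$, meaning the centres for $U_1$ and $U_2$ are built from the same covering skeleton. I would do this by choosing the $H$-covers of step one independently of $U$, so that corresponding centres $v^1,v^2$ at level $k$ differ by an amount controlled through the Lipschitz bound of $U$ in $H_1$ on $B$, namely $L=(1-\mu)/2+K\cdot\mathrm{const}$. This gives $\|v^1-v^2\|_{H_1}\le \sum_{l\le k}L^{k-l}\|U_1(n-l)-U_2(n-l)\|_{\mathbb{U}}$. Every point of $M_{U_i}(n)$ is within $Ce^{-\alpha k}$ of a level-$k$ centre, so
\[
\mathrm{dist}^{symm}_{H_1}(M_{U_1}(n),M_{U_2}(n))\le C e^{-\alpha k}+CL^k\delta,
\]
where $\delta$ is the weighted supremum $\sup_l e^{-\beta(n-l)}\|U_1(n-l)-U_2(n-l)\|_{\mathbb{U}}$ and the weights $e^{\beta l}$ are absorbed into $L^k$. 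Optimising over $k$, that is choosing $k\sim\log(1/\delta)/(\alpha+\log L)$, gives the bound $C\delta^{\vartheta}$ with $\vartheta=\alpha/(\alpha+\log L)$. The delicate points are making the tree construction depend measurably on $U$ only through the centres, and controlling the extra union over $j$ in the definition of $M_U$ by the same exponential estimate.
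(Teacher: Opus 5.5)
Your construction and your arguments for (1)--(3) are the paper's: the same trees $V_k(n)$ with centres selected inside the images, and your $M_U(n)=\overline{\bigcup_{j}U(n,n-j)E(n-j)}$ is exactly the paper's $[\bigcup_k E_k(n)]_{H_1}$. Your dimension count uses only $E_{k'}(n)\subset U(n,n-k)B(n-k)$ for $k'\ge k$. That is cleaner than the paper's route through Lemma~\ref{fuzhu1} (which, given \eqref{pf7}, is trivial), and it gives the correct bound $\log N/\log\frac{1}{1-\mu/2}$. Both arguments give a constant in (3) that depends on $\sup_n\mathrm{diam}\,B(n)$, so uniformity in $n$ really requires bounded diameters.

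The genuine gap is in (4), at the point you flag, and your proposed fix does not close it. Fixing the pattern $\{u_i\}$ of the $H$-cover independently of $U$ does not make the centres correspond. A child centre is $U(n)y$ with $y$ \emph{selected} in $U(n,n-k)B(n-k)\cap B_{H_1}(h,r_k)\cap B_H(h+r_ku_i,\kappa r_k)$. You need $y$ there (or at least in $B(n)$) both for the $S+C$ bounds and for the inclusion $V_k(n)\subset U(n,n-k)B(n-k)$ on which your count rests. For $U_1$ and $U_2$ these sets differ, because both the images and the parents $h^1\neq h^2$ differ. A piece may be empty for one process and not for the other, and when both are nonempty the selected points can be about $2r_k$ apart in $H_1$. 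So the estimate $\|v^1-v^2\|_{H_1}\le\sum_{l\le k}L^{k-l}\|U_1(n-l)-U_2(n-l)\|_{\mathbb{U}}$ is unsupported; only an $O(r_k)$ matching comes for free. This is fatal for the young centres, i.e.\ the extra union over $j$ you mention. For $x\in E^{U_1}_{k'}(n)$ with $k'$ small, attraction plus Lipschitz propagation give only $\mathrm{dist}_{H_1}(x,M_{U_2}(n))\lesssim r_{k'}+L^{k'}\delta$, which is not small.

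The paper does not supply this step either. Lemma~\ref{fuzhu2} simply asserts that for every $\bar h_1\in V^{U_1}_k(n-1,h_1)$ there is $\bar h_2\in V^{U_2}_k(n-1,h_2)$ with $\|\bar h_1-\bar h_2\|_{H_1}\le d_m(n-1)$. It also claims a $k$-independent constant, whereas your factor $L^k$ is the correct bookkeeping, and it then defers to Efendiev--Miranville--Zelik. So your route is the paper's, and balancing $e^{-\alpha k}$ against $L^k\delta$ is the right way to finish once a matching exists.

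What is missing is a rule for choosing the refinement points that is Lipschitz in the parent centre and in $U$ and discards no pieces; for instance, the pattern points $h+r_ku_i$ themselves. Such a rule needs the $S+C$ bounds, and the norm $\|U_1-U_2\|_{\mathbb{U}}$, on an $H_1$-neighbourhood of $B(n)$ of size comparable to its diameter that $U(n)$ maps into $B(n+1)$. This is not among the hypotheses. It also destroys $V_k(n)\subset U(n,n-k)B(n-k)$. Your inclusion-based dimension count would then have to be replaced by a telescoping estimate of the type of Lemma~\ref{fuzhu1}, and its restriction $l\le\alpha k$ becomes genuinely necessary.
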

The constant $C$ in the following proof may vary from line to line.

\begin{proof}
For any $n\in\mathbb{Z}$  and any $k\leq n$, there exist some constant $R_{n-k}>0$ and $v_{n-k}\in B(n-k)$ such that the bounded set $B(n-k)\subset B_{H_1}(v_{n-k},R_{n-k})$, where $B_{H_1}(v_{n-k},R_{n-k})$ denote a ball in $H_1$ centered in $v_{n-k}$ with radius $R_{n-k}$. For $k=1$, since $H_1$ is embedded in $H$ compactly, then there exists a finite set $V_0(n-1):=\{h_1,\cdots,h_{N_0}\}\subset B(n-1)$ such that
\begin{equation}\label{thp1}
  B(n-1)\subset \bigcup_{i=1}^{N_0}B_H(h_i,R_{n-1}\frac{\mu}{2K}).
\end{equation}

Then for every $y\in U(n,n-1)B(n-1)$ and some $x\in B(n-1)$ we have $y=U(n,n-1)x$, and for this specific $x$, according to \eqref{thp1} there exists $h_i\in V_0(n-1)$ such that $\|x-h_i\|_H\leq R_{n-1}\frac{\mu}{2K}$, then follows the estimate below:
\begin{align*}
  \|y-U(n,n-1)h_i\|_{H_1}   =& \|U(n,n-1)x-U(n,n-1)h_i\|_{H_1}\\
    & \leq \frac{1-\mu}{2}\|x-h_i\|_{H_1} + K\|x-h_i\|_H\\
    & \leq (1-\frac{\mu}{2})R_{n-1}.
\end{align*}
Let $V_1(n):=U(n,n-1)V_0(n-1)$ and $r_1(n):=(1-\frac{\mu}{2})R_{n-1}$, we can conclude that $U(n,n-1)B(n-1)$ can be covered by finite balls in $H_1$ centered in $V_1(n)$ with radius $r_1(n)$.

For any $n\in\mathbb{Z}$  and any $k\in\mathbb{N}$, we want to establish a finite covering for $U(n,n-k)B(n-k)$ in $H_1$ such that $U(n,n-k)B(n-k)$ is contained in balls of radius $r_k(n):=R_{n-k}(1-\frac{\mu}{2})^k$ with centers in $V_k(n)$. Note that the case of $k=1$ has been proved. We assume that the covering has been constructed for $l\leq n$, i.e. $U(n,n-l)B(n-l)$ can be covered by balls of radius $r_l(n):=R_{n-l}(1-\frac{\mu}{2})^l$ with centers in $V_l(n)$.

For $k=l+1$, we have
\begin{equation}\label{pf1}
  U(n,n-(l+1))B(n-(l+1)) = U(n,n-1)U(n-1,n-1-l)B(n-1-l).
\end{equation}
Due to our assumptions, we can obtain
\begin{align*}
  U(n-1,n-1-l) B(n-1-l)  &\subset \bigcup_{h\in V_l(n-1)}B_{H_1}(h,r_l(n-1)) \\
    & =\bigcup_{h\in V_l(n-1)}B_{H_1}(h,R_{n-1-l}(1-\frac{\mu}{2})^l).
\end{align*}

Consider the unit ball $B_{H_1}(0,1)$, there exists a finite covering in $H$ with radius $\frac{\mu}{4K}$ since the embedding $H_1\hookrightarrow H$ is compact. Let $\mathbb{V}:=\{u_1,\cdots,u_i,\cdots,u_N\}$ be the centers of the covering balls in $H$. For any $h\in V_l(n-1)$, we can construct a covering for every ball $B_{H_1}(h, R_{n-1-l}(1-\frac{\mu}{2})^l)$ in $H$, i.e.,
\begin{equation}\label{pf2}
  B_{H_1}(h, R_{n-1-l}(1-\frac{\mu}{2})^l)\subset \bigcup_{i=1}^N B_H(h+R_{n-1-l}(1-\frac{\mu}{2})^lu_i,R_{n-1-l}(1-\frac{\mu}{2})^l\frac{\mu}{4K}),
\end{equation}
for some $1\leq i\leq N$.

For any $u,w\in U(n-1,n-1-l)B(n-1-l)\cap B_{H_1}(h,r_l(n-1))\cap B_H(h+r_l(n-1)u_i,r_l(n-1)\frac{\mu}{4K})$, we have
\begin{align}\label{pf3}
  \|C(n,n-1)u-C(n,n-1)w\|_{H_1} & \leq K\|u-w\|_H \nonumber \\
   & \leq K\cdot 2R_{n-1-l}(1-\frac{\mu}{2})^l\frac{\mu}{4K} \nonumber \\
    & = \frac{\mu}{2}R_{n-1-l}(1-\frac{\mu}{2})^l,
\end{align}
then there exists $w_i^h\in U(n,n-1-l)B(n-1-l)$ such that:
\begin{align}\label{pf4}
  C(n,n-1)\big(U(n-1,n-1-l)B(n-1-l)\cap B_{H_1}(h, R_{n-1-l}(1-\frac{\mu}{2})^l)\big)  \nonumber \\
    \subset \bigcup_{i=1}^N B_{H_1}(C(n,n-1)w_i^h,\frac{\mu}{2}R_{n-1-l}(1-\frac{\mu}{2})^l).
\end{align}

For any $u\in U(n-1,n-1-l)B(n-1-l)\cap B_{H_1}(h,R_{n-1-l}(1-\frac{\mu}{2})^l)$ and $1\leq i \leq N$, we have
\begin{align}\label{pf5}
  \|S(n,n-1)u-S(n,n-1)y^h_i\|_{H_1}  & \leq \frac{1-\mu}{2}\|u-y^h_i\|_{H_1} \nonumber \\
    & \leq (1-\mu)R_{n-1-l}(1-\frac{\mu}{2})^l.
\end{align}

Combine \eqref{pf4} and \eqref{pf5}, since $U(n,n-1)=S(n,n-1)+C(n,n-1)$ we have
\begin{align}\label{pf66}
  & U(n,n-1)\left(U(n-1,n-1-l)B(n-1-l)\cap B_{H_1}(h,R_{n-1-l}(1-\frac{\mu}{2})^l)\right) \nonumber  \\
  & = (S(n,n-1)+C(n,n-1))\left(U(n-1,n-1-l)B(n-1-l)\cap B_{H_1}(h,R_{n-1-l}(1-\frac{\mu}{2})^l)\right) \nonumber \\
  & \subset \bigcup_{i=1}^N B_{H_1}(U(n,n-1)y_i^h, R_{n-1-l}(1-\frac{\mu}{2})^{l+1}).
\end{align}

Let $V_{l+1}(n):= \bigcup_{h\in V_l(n-1)}\bigcup_{i=1}^NU(n,n-1)y_i^h$ and $r_{l+1}(n):=R_{n-1-l}(1-\frac{\mu}{2})^{l+1}$, we can conclude that $U(n,n-1-l)B(n-1-l)$ can be covered by balls centered in $V_{l+1}(n)$ with radius $r_{l+1}(n)$. Then for every $n\in\mathbb{Z}$ and every $k\in\mathbb{N}$, we have established a finite covering for $U(n,n-k)B(n-k)$ in $H_1$ such that $U(n,n-k)B(n-k)$ is contained in balls of radius $r_k(n):=R_{n-k}(1-\frac{\mu}{2})^k$ with centers in $V_k(n)$.

Let $\sharp$ denote the cardinality of a set.
According to the construction, we can conclude that
\begin{align}
 & \sharp V_k(n) = N_0N^{k-1},\label{pf6}\\
  & V_k(n)\subset U(n,n-k)B(n-k) \label{pf7}\\
  & \mathrm{dist}_{H_1}(U(n,n-k)B(n-k),V_k(n))\leq R_{n-k}(1-\frac{\mu}{2})^k,\label{pf8}\\
  & \mathrm{dist}^{\mathrm{symm}}_{H_1}(V_{k+1}(n), U(n,n-1)V_k(n-1))\leq R_{n-1-k}(1-\frac{\mu}{2})^{k+1} \label{pf9}.
\end{align}

Note that the conclusion \eqref{pf6}-\eqref{pf8} can be proved directly from the construction, we prove \eqref{pf9} in the following.

Noticed that $V_{k-1}(n-1)\subset U(n-1,n-1-k)B(n-1-k)$ we have
\begin{align}\label{pf10}
  & \mathrm{dist}_{H_1}(U(n,n-1)V_k(n-1),V_{k+1}(n)) \nonumber   \\
  & \leq\mathrm{dist}_{H_1}(U(n,n-1)U(n-1,n-1-k)B(n-1-k),V_{k+1}(n)) \nonumber \\
  & =\mathrm{dist}_{H_1}(U(n,n-1-k)B(n-1-k),V_{k+1}(n)) \nonumber \\
  & \leq  R_{n-1-k}(1-\frac{\mu}{2})^{k+1}.
\end{align}

On the other hand, we can see the fact that $V_{k+1}(n)=\bigcup_{h\in V_k(n-1)}U(n,n-1)V_{k+1}(n-1,h),$ where
\begin{equation}\label{vnh}
  V_{k+1}(n-1,h):= \bigcup_{i=1}^Ny_i^h.
\end{equation}

By the definition of Hausdorff distance, we have
\begin{align*}
  & \mathrm{dist}_{H_1}(V_{k+1}(n),U(n,n-1)V_k(n-1))\nonumber\\
    & = \max_{h\in V_k(n-1)}\mathrm{dist}_{H_1}(U(n,n-1)V_{k+1}(n-1,h),U(n,n-1)V_k(n-1)) \nonumber \\
   &  = \mathrm{dist}_{H_1}(U(n,n-1)V_{k+1}(n-1,\hat{h}),U(n,n-1)V_k(n-1)) \nonumber \\
    & \leq \mathrm{dist}_{H_1}(U(n,n-1)V_{k+1}(n-1,\hat{h}),U(n,n-1)\hat{h}),
\end{align*}
for some $\hat{h}\in V_k(n-1)$.

Due to the construction of $V_k(n)$, for some $1\leq i_0 \leq N$ we can deduce that :
\begin{align}\label{pf12}
  & \mathrm{dist}_{H_1}(V_{k+1}(n),U(n,n-1)V_k(n-1))\nonumber\\
    & \leq \mathrm{dist}_{H_1}(U(n,n-1)y_{i_0}^{\hat{h}},U(n,n-1)\hat{h})\nonumber \\
    & \leq \frac{1-\mu}{2}\|y_{i_0}^{\hat{h}}-\hat{h}\|_{H_1} + K \|y_{i_0}^{\hat{h}}-\hat{h}\|_H \nonumber\\
    & \leq \frac{1-\mu}{2}\cdot 2R_{n-1-k}(1-\frac{\mu}{2})^k + K\cdot2R_{n-1-k}(1-\frac{\mu}{2})^k\frac{\mu}{4K} \nonumber\\
    & = R_{n-1-k}(1-\frac{\mu}{2})^{k+1}.
\end{align}
Thus the validity of \eqref{pf9} follows from the inequality \eqref{pf10} and \eqref{pf12}.

We define the sets $E_k(n)$ by
\begin{equation}\label{pfe}
  E_1(n)=V_1(n), \quad E_k(n)=V_k(n)\cup U(n,n-1)E_{k-1}(n-1).
\end{equation}
From the construction, we can get the following conclusions:
\begin{align}
  & \sharp E_k(n) = kN_0N^k,\label{pf13}\\
  & U(n+1,n)E_k(n)\subset E_{k+1}(n+1), \label{pf14}\\
  & \mathrm{dist}_{H_1}(U(n,n-k)B(n-k),E_k(n))\leq R_{n-k}(1-\frac{\mu}{2})^k.\label{pf15}
\end{align}
Properties \eqref{pf13}-\eqref{pf15} can be simply deduced from the properties of $V_k(n)$ and the construction of $E_k(n)$.

Then we can construct the required exponential attractors $\{M(n)\}_{n\in\mathbb{Z}}$ in the following way:
\begin{equation}\label{pf16}
  M^\prime(n):=\bigcup_{k=1}^\infty E_k(n) \qquad \text{and}\qquad M(n)=\big[M^\prime(n)\big]_{H_1},
\end{equation}
where $[ \cdot ]_{H_1}$ denote the closure in $H_1$.

In order to show that $\{M(n)\}_{n\in\mathbb{Z}}$ is the desired family of exponential attractors in Theorem \ref{th1}, we need to verify that $M(n)$ satisfies the properties of semi-invariance, exponential attraction, finite fractal dimension and H\"{o}lder continuity. In fact, the semi-invariance \eqref{con2} and the exponential attraction property \eqref{con3}  are straightforward consequences of \eqref{pf14}, \eqref{pf15} and \eqref{pf16}. The proof of the latter two properties requires two conclusions as auxiliary.
\end{proof}

\begin{lemma}\label{fuzhu1}Let $E_k(n)$ be as in \eqref{pfe}. Then there exist constants $C,~\alpha$ depending on $\mu,~K$ such that
\begin{equation}\label{fpf}
  \mathrm{dist}_{H_1}(E_k(n),U(n,n-l)B(n-l))\leq CR_{n-k}(1-\frac{\mu}{2})^{\alpha k},
\end{equation}
for all $n\in\mathbb{Z}$, $k\in\mathbb{N}$ and $l\leq\alpha k$.
\end{lemma}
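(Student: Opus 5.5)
The plan is to prove the stronger statement that $E_k(n)$ is actually \emph{contained} in $U(n,n-l)B(n-l)$ for every $l\leq k$. This makes the left-hand side of \eqref{fpf} equal to zero, so the lemma holds with any $\alpha\in(0,1]$ (say $\alpha=1$ or $\alpha=\tfrac12$) and any $C>0$. The only ingredients are the inclusion \eqref{pf7}, the recursive definition \eqref{pfe}, the cocycle property of $U$, and the positive invariance $U(n)B(n)\subset B(n+1)$ assumed in Theorem \ref{th1}.

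\emph{Step 1.} Show by induction on $k$ that $E_k(n)\subset U(n,n-k)B(n-k)$ for all $n\in\mathbb{Z}$.
For $k=1$ this is \eqref{pf7}, since $E_1(n)=V_1(n)$.
Assume it holds for $k-1$ and every $n$. By \eqref{pfe}, $E_k(n)=V_k(n)\cup U(n,n-1)E_{k-1}(n-1)$.
The first piece lies in $U(n,n-k)B(n-k)$ by \eqref{pf7}.
For the second piece, the induction hypothesis at time $n-1$ gives $E_{k-1}(n-1)\subset U(n-1,n-k)B(n-k)$. Hence
\[
U(n,n-1)E_{k-1}(n-1)\subset U(n,n-1)U(n-1,n-k)B(n-k)=U(n,n-k)B(n-k).
\]

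\emph{Step 2.} Pass from $k$ to any $l\leq k$. Iterating $U(m)B(m)\subset B(m+1)$ gives $U(n-l,n-k)B(n-k)\subset B(n-l)$. Therefore
\[
U(n,n-k)B(n-k)=U(n,n-l)U(n-l,n-k)B(n-k)\subset U(n,n-l)B(n-l).
\]
Combining with Step 1, every point of $E_k(n)$ lies in $U(n,n-l)B(n-l)$. So $\mathrm{dist}_{H_1}(E_k(n),U(n,n-l)B(n-l))=0\leq CR_{n-k}(1-\frac{\mu}{2})^{\alpha k}$ for all $l\leq \alpha k$ once $\alpha\leq1$. Note that for $l\leq 0$ the set $U(n,n-l)$ is not a forward map, so $l$ should be read as ranging over $1\leq l\leq \alpha k$, or $0\leq l$ with $U(n,n)=Id$, where the claim is again Step 2 with $B(n)$.

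The main point to watch is bookkeeping rather than analysis. One has to check that \eqref{pf7} really holds as stated: the centres $y_i^h$ were chosen inside $U(n-1,n-1-l)B(n-1-l)$, which the construction permits by picking each centre in the intersection it covers, and one should note this choice explicitly. One also has to make sure that positive invariance is used only in the forward direction. If the intended reading of the lemma instead requires a non-trivial $\alpha<1$, for instance to be used later for the Hölder estimate \eqref{con4} with perturbed processes, I would keep the same inclusion argument. The exponential factor $R_{n-k}(1-\frac{\mu}{2})^{\alpha k}$ then only absorbs the errors coming from \eqref{pf9} summed along the recursion.
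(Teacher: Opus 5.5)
Your argument is correct, and it is a genuine simplification of the paper's proof.

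The paper writes $E_k(n)=\bigcup_{m=0}^{k-1}U(n,n-m)V_{k-m}(n-m)$ and treats the pieces separately. For $m\ge l$ it uses precisely your chain $U(n,n-m)V_{k-m}(n-m)\subset U(n,n-k)B(n-k)\subset U(n,n-l)B(n-l)$. For $m<l$ it instead compares $U(n,n-m)V_{k-m}(n-m)$ with $U(n,n-l)V_{k-l}(n-l)$ through a telescoping sum of the one-step defects \eqref{pf9}, propagated by the iterated bound \eqref{fpf2}. Balancing the growth of the propagation constants against the decay $(1-\frac{\mu}{2})^{k-j}$ is what produces the restriction $l\le\alpha k$ and the explicit choice of $\alpha$.

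You noticed that the inclusion chain never uses $m\ge l$. It only needs $l\le k$, \eqref{pf7} and $U(n)B(n)\subset B(n+1)$, so it disposes of every piece at once. The left-hand side of \eqref{fpf} is then zero for all $0\le l\le k$, and any $\alpha\in(0,1]$ works. This buys three things:
\begin{itemize}
\item a shorter proof with a stronger conclusion;
\item it avoids \eqref{fpf2}, whose iterated form with pure powers $(\frac{1-\mu}{2})^j$ and $K^j$ and no cross terms would need more justification, since the $H$-distance of the images is not controlled by the $H$-distance of the sets alone;
\item it is exactly what the fractal-dimension count needs: $E_k(n)\subset U(n,n-m)B(n-m)$ for $k\ge m$, so all these $E_k(n)$ lie in the balls of radius $r_m(n)$ centred at $V_m(n)$.
\end{itemize}

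A telescoping estimate like the paper's is what one would need if the centres were not nested inside the image sets. That is why your remark is the one point worth making explicit: $y_i^h$ must be chosen in $U(n-1,n-1-l)B(n-1-l)$ intersected with the two covering balls. The paper's construction does not state this cleanly.

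Your closing hedge is unnecessary. The dependence on $U$ is handled separately in Lemma~\ref{fuzhu2}, and your inclusion leaves no error from \eqref{pf9} to absorb.
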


\begin{proof}
First we estimate $\mathrm{dist}_{H_1}(U(n,n-m)V_{k-m}(n-m),U(n,n-l)V_{k-l}(n-l))$ for all $n\in\mathbb{Z}$, $k\in\mathbb{N}$ and $0\leq m <l$.

By the triangle inequality we have:
\begin{align}\label{fpf1}
   & \mathrm{dist}_{H_1}(U(n,n-m)V_{k-m}(n-m),U(n,n-l)V_{k-l}(n-l))\nonumber  \\
   &  \leq \mathrm{dist}_{H_1}(U(n,n-m)V_{k-m}(n-m),U(n,n-m-1)V_{k-m-1}(n-m-1))+ \nonumber\\
    &  \cdots +\mathrm{dist}_{H_1}(U(n,n-m-i)V_{k-m-i}(n-m-i),U(n,n-m-i-1)V_{k-m-i-1}(n-m-i-1))\nonumber\\
    &   +\cdots + \mathrm{dist}_{H_1}(U(n,n-l+1)V_{k-l+1}(n-l+1),U(n,n-l)V_{k-l}(n-l),
\end{align}
where $0\leq i \leq l-m-1$.

For all $n\in\mathbb{Z}$, $j\in\mathbb{N}$ and any $A_1,A_2\subset B(n-j)$, according to assumption \eqref{ass1}, the following inequality can be obtained by induction:
\begin{equation}\label{fpf2}
  \begin{aligned}
    & \mathrm{dist}^{\mathrm{symm}}_{H_1}(U(n,n-j)A_1,U(n,n-j)A_2)\\
   &  \leq  (\frac{1-\mu}{2})^j\mathrm{dist}^{\mathrm{symm}}_{H_1}(A_1,A_2) + K^j\mathrm{dist}^{\mathrm{symm}}_H(A_1,A_2).
  \end{aligned}
\end{equation}

Together with \eqref{fpf2} and property \eqref{pf9} we have:
\begin{align}\label{fpf3}
    & \mathrm{dist}_{H_1}(U(n,n-j)V_{k-j}(n-j),U(n,n-j-1)V_{k-j-1}(n-j-1))\nonumber \\
  &  \leq  (\frac{1-\mu}{2})^jR_{n-k}(1-\frac{\mu}{2})^{k-j} + K^jR_{n-k}(1-\frac{\mu}{2})^{k-j}.
\end{align}

Then substitute \eqref{fpf3} into \eqref{fpf1} we obtain:
\begin{align}\label{fpf33}
    & \mathrm{dist}_{H_1}(U(n,n-m)V_{k-m}(n-m),U(n,n-l)V_{k-l}(n-l))\nonumber \\
   &  \leq \sum_{i=1}^l\left((\frac{1-\mu}{2})^{l-i}R_{n-k}(1-\frac{\mu}{2})^{k+i-l} + K^{l-i}R_{n-k}(1-\frac{\mu}{2})^{k+i-l} \right) \nonumber\\
   &  \leq R_{n-k}\left(\frac{(\frac{1-\mu}{2})^{l-1}(1-\frac{\mu}{2})^{k+1-l}-(1-\frac{\mu}{2})^k}{1-(\frac{1-\nu}{2})^{-1}(1-\frac{\mu}{2})} + \frac{K^{l-1}(1-\frac{\mu}{2})^{k+1-l}-(1-\frac{\mu}{2})^k}{1-K^{-1}(1-\frac{\mu}{2})}\right)\nonumber \\
   &  \leq R_{n-k}\left(\frac{(\frac{1-\mu}{2})^{l-1}(1-\frac{\mu}{2})^{k+1-l}}{1-(\frac{1-\nu}{2})^{-1}(1-\frac{\mu}{2})} + \frac{K^{l-1}(1-\frac{\mu}{2})^{k+1-l}}{1-K^{-1}(1-\frac{\mu}{2})}\right).
\end{align}
By choosing $$\alpha = \min\left\{\frac{\log_2(1-\frac{\mu}{2})}{\log_2(\frac{1-\mu}{2})^{-1}-2\log_2(1-\frac{\mu}{2})},\frac{\log_2(1-\frac{\mu}{2})}{\log_2K-2\log_2(1-\frac{\mu}{2})}\right\},$$
then for $l\leq\alpha k$ we have:
\begin{equation}\label{fpf4}
  \mathrm{dist}_{H_1}(U(n,n-m)V_{k-m}(n-m),U(n,n-l)V_{k-l}(n-l)) \leq C R_{n-k}(1-\frac{\mu}{2})^{\alpha k},
\end{equation}
where the constants $C$ and $0<\alpha<1$ depend only on $\mu,~K$.

By the recursive definition \eqref{pfe} we can deduce that $E_k(n)=\sum_{m=0}^{k-1}U(n,n-m)V_{k-m}(n-m)$, then follows
\begin{align}\label{fpf5}
    & \mathrm{dist}_{H_1}(E_k(n),U(n,n-l)B(n-l)) \nonumber \\
    & = \mathrm{dist}_{H_1}(\sum_{m=0}^{k-1}U(n,n-m)V_{k-m}(n-m),U(n,n-l)B(n-l)).
\end{align}

For all $n\in\mathbb{Z}$, $k\in\mathbb{N}$, $0\leq m <l$ and $l\leq\alpha k$, we have
\begin{align}\label{fpf6}
    & \mathrm{dist}_{H_1}(U(n,n-m)V_{k-m}(n-m),U(n,n-l)B(n-l)) \nonumber \\
    & = \mathrm{dist}_{H_1}(U(n,n-m)V_{k-m}(n-m),U(n,n-l)V_{k-l}(n-l)) \nonumber\\
   &  + \mathrm{dist}_{H_1}(U(n,n-l)V_{k-l}(n-l),U(n,n-l)B(n-l)) \nonumber\\
  &   \leq C R_{n-k}(1-\frac{\mu}{2})^{\alpha k},
\end{align}
where we used the inequality \eqref{fpf4} and the property $V_{k-l}(n-l)\subset U(n-l,n-k)B(n-k)$ by \eqref{pf7}.

In addition, for all $n\in\mathbb{Z}$, $k\in\mathbb{N}$, $m\geq l$ and $l\leq\alpha k$, due to the property \eqref{pf7} and the semi-invariance of $\{B(s)\}_{s\in\mathbb{Z}}$ we have:
\begin{align*}
 & U(n,n-m)V_{k-m}(n-m)   \subset U(n,n-m)U(n-m,n-k)B(n-k) \\
    & \subset U(n,n-k)B(n-k)= U(n,n-l)U(n-l,n-k)B(n-k)\\
   &  \subset U(n,n-l)B(n-l).
\end{align*}
Thus the following formula holds:
\begin{equation}\label{pfp7}
  \mathrm{dist}_{H_1}(U(n,n-m)V_{k-m}(n-m),U(n,n-l)B(n-l)) = 0.
\end{equation}

Finally, the lemma is proved by using inequalities \eqref{fpf5}, \eqref{fpf6} and \eqref{pfp7}.
\end{proof}

Let us verify the finite dimensionality for $\{M(n)\}_{n\in\mathbb{Z}}$. Let $\varepsilon>0$ be sufficiently small, we need to construct a covering of $M(n)$ by a finite number of $\varepsilon-$balls in $H_1$. By applying the property \eqref{pf8}, we can cover $U(n,n-m)B(n-m)$ with $\frac{\varepsilon}{2}-$balls in $H_1$ centered at $V_{m}(n)$ where $m$ satisfies $$R_{n-m}(1-\frac{\mu}{2})=\frac{\varepsilon}{2}\quad\text{and}\quad m=m(\varepsilon)=(1-\frac{\mu}{2})^{-1}\log_2\frac{2R_{n-m}}{\varepsilon}.$$

In addition, by using Lemma \ref{fuzhu1} we can cover $E_k(n)$ with $\varepsilon$-balls in $H_1$ centered at $V_{m}(n),$ where $k$ satisfies:
\begin{align*}
    & \mathrm{dist}_{H_1}(E_k(n),V_m(n)) \\
   &  \leq \mathrm{dist}_{H_1}(E_k(n),U(n,n-m)B(n-m)) + \mathrm{dist}_{H_1}(U(n,n-m)B(n-m),V_m(n))\\
   &  \leq CR_{n-m}(1-\frac{\mu}{2})^m + \frac{\varepsilon}{2} \leq \varepsilon.
\end{align*}
Then the assertion in below follows:
\begin{equation*}
  k\geq k(\varepsilon) = (1-\frac{\mu}{2})^{-1}\log_2\frac{2CR_{n-m}}{\varepsilon}.
\end{equation*}

Note that $$M(n)=\left[M^{\prime}(n)\right]_{H_1}=\left[\bigcup_{k=1}^\infty E_k(n)\right]_{H_1},$$
combining with the discussion above we have:
\begin{align*}
  N_\varepsilon(M^\prime(n),H_1)  & \leq \sharp V_m(n) + \sum_{k\leq k(\varepsilon)} E_k(n) \\
    & \leq N_0N^{m(\varepsilon)-1} + N_0\sum_{k\leq k(\varepsilon)}kN^k\\
   &  \leq N_0(1+m^2(\varepsilon))N^{k(\varepsilon)},
\end{align*}
then the fractional dimension of $M(n)$ can be estimated as:
\begin{align}\label{dpf}
  \dim_F(M(n),H_1)  & = \dim_F(M^\prime(n),H_1)\nonumber\\
    & =\limsup_{\varepsilon\to0}\frac{\log_2 N_\varepsilon(M(n),H_1)}{\log_2 \frac{1}{\varepsilon}}\nonumber \\
     &\leq (1-\frac{\mu}{2})^{-1}\log_2N,
\end{align}
where we used the assumption that the radius of $\{B(s)\}_{s\in\mathbb{Z}}$ is sub-exponential growth in the past. We can also deduce from \eqref{dpf} that $M(n)$ is precompact for all $n\in\mathbb{Z}$.

Next we need one more auxiliary lemma to prove the H\"{o}lder continuity for $M^U(n)$.
\begin{lemma}\label{fuzhu2}
For any $n\in\mathbb{Z}$ and $k\in\mathbb{N}$, let $\{B(n)\}_{n\in\mathbb{Z}}$ satisfies the assumptions in Theorem \ref{th1} and $\{E_k^U(n)\}_{n\in\mathbb{Z}}$ be constructed as in \eqref{pfe} for every $U(n)\in\mathbb{U}_{\mu,K}(B(n))$. Then for any $U_1(n),U_2(n)\in\mathbb{U}_{\mu,K}(B(n))$, the following inequality holds:
\begin{equation}\label{hpf}
  \mathrm{dist}_{H_1}^{\mathrm{symm}}\left(E_k^{U_1}(n),E_k^{U_2}(n)\right) \leq C\sup_{l\in(0,k]}\|U_1(n-l)-U_2(n-l)\|_{\mathbb{U}},
\end{equation}
where the constant $C$ depends only on $\mu,K$.
\end{lemma}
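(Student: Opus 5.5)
The plan is to build the two families $E_k^{U_1}(n)$ and $E_k^{U_2}(n)$ \emph{simultaneously} by a coupled version of the covering construction in the proof of Theorem \ref{th1}. Every point of $E_k^{U_1}(n)$ is then paired with a point of $E_k^{U_2}(n)$ carrying the same combinatorial label, namely the same starting index in $V_0$ and the same sequence of indices $i\in\{1,\dots,N\}$ from the fixed $H$-net $\mathbb{V}=\{u_1,\dots,u_N\}$. By the recursion \eqref{pfe}, $E_k^{U}(n)=\bigcup_{m=0}^{k-1}U(n,n-m)V_{k-m}(n-m)$. It therefore suffices to bound, uniformly in the label, the $H_1$-distance between paired points of $U_1(n,n-m)V^{U_1}_{k-m}(n-m)$ and $U_2(n,n-m)V^{U_2}_{k-m}(n-m)$. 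Because every point has a partner, the symmetric Hausdorff distance is at most the supremum of these paired distances.

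Concretely, I start both constructions from the same $V_0(n-k)$. At each step I use the same translated $H$-balls $B_H(h+r\,u_i,\,r\mu/(4K))$ as in \eqref{pf2}, translated about the respective centers $h^1$ and $h^2$, which are paired points. In each such cell I choose the representative $y_i^{h,2}$ for $U_2$ to be a point of the $U_2$-image set lying as close as possible to the corresponding representative $y_i^{h,1}$ for $U_1$. Writing $d_j$ for the maximal distance between paired centers after $j$ steps and $\delta_j:=\|U_1(n-j)-U_2(n-j)\|_{\mathbb{U}}$, the splitting $U=S+C$ gives
\begin{equation*}
\|U_1y^{1}-U_2y^{2}\|_{H_1}\leq \|U_1y^1-U_2y^1\|_{H_1}+\tfrac{1-\mu}{2}\|y^1-y^2\|_{H_1}+K\|y^1-y^2\|_H .
\end{equation*}
The first term is at most $\delta_j$, since $y^1\in B(n-j)$ by \eqref{pf7}.

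The heart of the argument, and the step I expect to be the main obstacle, is controlling the compact term $K\|y^1-y^2\|_H$ without creating a factor $L>1$ per step. A crude bound $\|y^1-y^2\|_H\le c\|y^1-y^2\|_{H_1}$ would give $d_{j+1}\le L d_j+\delta_j$ and hence the useless $L^k$ growth. Instead I exploit that $y^1$ and $y^2$ lie in cells of the \emph{same} net shifted by $h^1$ and $h^2$ respectively. Their $H$-distance is then bounded by the offset of the two cell centers plus the cell diameters. To keep this offset small, I plan to choose the $U_2$-representatives inside the intersection of the two corresponding cells whenever this intersection meets the $U_2$-image set, and otherwise to use the covering radius bound directly.

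With a sufficiently fine net, say replacing $\mu/(4K)$ by $\mu/(8K)$, which only changes $N$ and keeps all constants depending on $\mu,K$, I aim to derive the recursion
\begin{equation*}
d_{j+1}\leq q\,d_j+\delta_j,\qquad q:=1-\tfrac{\mu}{4}<1 .
\end{equation*}
The recursion alone is not enough, because the transported term $\frac{1-\mu}{2}d_j$ must also be controlled by the covering. I would therefore also use the a priori bound $d_j\le 2r_j$, obtained from \eqref{pf8} since both centers approximate the same image sets, and take the minimum of the two estimates at each step.

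Once the recursion holds, iterating it gives $d_k\le\sum_{j}q^{k-1-j}\delta_j\le (1-q)^{-1}\sup_{l\in(0,k]}\delta_{l}$. Applying this to each layer $m$ of $E_k$, and noting that the layers only shift the indices within $(0,k]$, yields \eqref{hpf} with $C=4/\mu$, up to the harmless refinement constants. This constant depends only on $\mu$ and $K$, as required.
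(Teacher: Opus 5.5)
\paragraph{The recursion you need cannot hold.}
Everything rests on the recursion $d_{j+1}\le q\,d_j+\delta_j$ with $q<1$. It cannot be derived from your scheme, and in fact it is false in general.

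First, the cell terms are of size $r_j$, not $\delta_j$. With net radius $\frac{\mu}{8K}$, your cell comparison gives $\|y^1-y^2\|_H\le\|h^1-h^2\|_H+\frac{\mu}{4K}r_j$, so $K\|y^1-y^2\|_H\le K\|h^1-h^2\|_H+\frac{\mu}{4}r_j$. The last term is a fixed fraction of the covering radius and does not vanish as $\delta_j\to0$. Your fallback ``covering radius bound'' has the same defect, and $\frac{1-\mu}{2}\|y^1-y^2\|_{H_1}$ is likewise only bounded by $\frac{1-\mu}{2}(d_j+2r_j)$.

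Second, the offset term does not contract. $K\|h^1-h^2\|_H$ is only bounded by $Kc\,d_j$, where $c$ is the embedding constant, and there is no reason for this to be $<d_j$.

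Third, the a priori bound $d_j\le 2r_j$ is not available. $V_j^{U_1}$ and $V_j^{U_2}$ approximate the \emph{different} sets $U_1(\cdot,\cdot)B$ and $U_2(\cdot,\cdot)B$. In any case, minima of $O(r_j)$ quantities can never produce $C\sup_l\delta_l$, which must vanish when $U_1=U_2$.

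Fourth, the closest-point choice of $y^{h,2}$ silently assumes that the $U_2$-image set comes within $O(\delta)$ of $y^{h,1}$. That is exactly the claim that long-time images depend Lipschitz-continuously on $U$.

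Most decisively, the layer $U(n,n-k)V_0(n-k)\subset E_k(n)$ involves no representative choice at all. Your label pairing matches $U_1(n,n-k)v$ with $U_2(n,n-k)v$. Take $H=H_1=\mathbb{R}$, $S=0$, $K=2$, the maps $x\mapsto\max(-1,\min(1,2x))$ and $x\mapsto\max(-1,\min(1,2x+\delta))$ on $[-1,1]$, and $v=0$. These points are $(2^k-1)\delta$ apart as long as this is less than $1$. So pairing by labels gives at best $CL^k\sup_{l\in(0,k]}\delta_l$ with some $L=L(\mu,K)>1$.

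\paragraph{The statement itself is false.}
This is not a repairable technicality: with $C$ independent of $k$, \eqref{hpf} fails for every construction satisfying \eqref{pf7}--\eqref{pf8}. Indeed $V_k^U(n)\subset E_k^U(n)\subset U(n,n-k)B(n-k)$ gives $\mathrm{dist}^{\mathrm{symm}}_{H_1}(E_k^U(n),U(n,n-k)B(n-k))\le r_k(n)$. Hence \eqref{hpf} would make the pullback images Lipschitz in $U$ up to an error $2r_k(n)\to0$.

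Concretely, take $H=H_1=\mathbb{R}$, $B(n)=[-1,1]$, $S=0$, $K=2$, and $U_\epsilon(x)=x+\frac14x(\epsilon-x^2)$ for $\epsilon\in[0,1]$.
\begin{itemize}
\item Each $U_\epsilon$ is odd, increasing, $\frac54$-Lipschitz, and maps $[-1,1]$ into itself.
\item $U_\epsilon^k[-1,1]=[-a_k^\epsilon,a_k^\epsilon]$ with $a_k^\epsilon\ge\sqrt\epsilon$, since $\sqrt\epsilon$ is a fixed point, while $a_k^0\to0$.
\item $\|U_\epsilon-U_0\|_{\mathbb{U}}=\epsilon/4$.
\end{itemize}
Therefore $\mathrm{dist}^{\mathrm{symm}}_{H_1}(E_k^{U_\epsilon}(n),E_k^{U_0}(n))\ge\sqrt\epsilon-a_k^0-2(1-\frac{\mu}{2})^k\to\sqrt\epsilon$. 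This is not $\le C\epsilon/4$ for small $\epsilon$.

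\paragraph{Relation to the paper's proof.}
The paper's proof follows the same inductive pairing route as yours. It reaches a $k$-independent constant only in two ways. It asserts without justification that paired representatives are $d_m(n-1)$-close. It also absorbs the factor $\frac{1-\mu}{2}+K$ into ``$C$'' at every induction step, and likewise the factor in \eqref{hpf5}.

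What this argument genuinely supports is a bound $CL^k\sup_{l\in(0,k]}\|U_1(n-l)-U_2(n-l)\|_{\mathbb{U}}$, as in Efendiev--Miranville--Zelik. That is all one needs: balancing it against Lemma \ref{fuzhu1} with $k\sim\log(1/\delta)$ yields the H\"older estimate \eqref{con4} with $\vartheta<1$.

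Note also that a pairwise coupling, with $U_2$'s representatives chosen from $U_1$'s, does not define a single family $U\mapsto E_k^U$ for which the estimate holds for all pairs at once.
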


\begin{proof}We estimate the Hausdorff distance between $V_k^{U_1}(n)$ and  $V_k^{U_2}(n)$ first. Define $$d_k(n)=\mathrm{dist}_{H_1}(V_k^{U_1}(n),V_k^{U_2}(n)).$$
For any $n\in\mathbb{Z}$ and $k\in\mathbb{N}$, our aim is to prove that
\begin{equation}\label{hpf1}
  d_k(n) \leq C\sup_{l\in(0,k]}\|U_1(n-l)-U_2(n-l)\|_{\mathbb{U}},
\end{equation}
where the constant $C$ depends only on $\mu,K$.
Note that $V_0(n-1)$ is same for every $U(n)\in\mathbb{U}_{\mu,K}(B(n))$, then
\begin{align*}
  d_1(n)  & = \mathrm{dist}_{H_1}(V_1^{U_1}(n),V_1^{U_2}(n)) \\
    &  = \mathrm{dist}_{H_1}(U_1(n-1)V_0(n-1),U_2(n-1)V_0(n-1))\\
    & \leq \|U_1(n-l)-U_2(n-l)\|_{\mathbb{U}},
\end{align*}
which implies that the inequality \eqref{hpf1} holds for $k=1$.

We assume that for any $n\in\mathbb{Z}$ the inequality \eqref{hpf1} holds for $k=m$. Then for any $h_1\in V_m^{U_1}(n-1)$, there exists $h_2\in V_m^{U_2}(n-1)$ such that
\begin{equation*}
  \|h_1-h_2\|_{H_1} \leq d_m(n-1).
\end{equation*}
According to the construction of $V_k(n-1,h)$ in \eqref{vnh}, we conclude that for any $\bar{h}_1\in V^{U_1}_k(n-1,h_1)$, there exists $\bar{h}_2\in V^{U_2}_k(n-1,h_2)$ such that
\begin{equation*}
  \|\bar{h}_1-\bar{h}_2\|_{H_1} \leq d_m(n-1).
\end{equation*}

Then we can obtain the following estimate by using \eqref{ass1}:
\begin{align}\label{hpf2}
    & \|U_1(n,n-1)\bar{h}_1-U_2(n,n-1)\bar{h}_2\|_{H_1} \nonumber\\
    & \leq \|U_1(n,n-1)\bar{h}_1-U_1(n,n-1)\bar{h}_2\|_{H_1} + \|U_1(n,n-1)\bar{h}_2-U_2(n,n-1)\bar{h}_2\|_{H_1}\nonumber\\
   &  \leq\frac{1-\mu}{2}\|\bar{h}_1-\bar{h}_2\|_{H_1} + K \|\bar{h}_1-\bar{h}_2\|_H + \|U_1(n-1)-U_2(n-1)\|_{\mathbb{U}}\nonumber\\
   &  \leq C\left(\frac{1-\mu}{2} + K \right) \sup_{l\in(0,m]}\|U_1(n-1-l)-U_2(n-1-l)\|_{\mathbb{U}} + \|U_1(n-1)-U_2(n-1)\|_{\mathbb{U}} \nonumber\\
    & \leq C \sup_{l\in(0,m+1]}\|U_1(n-l)-U_2(n-l)\|_{\mathbb{U}}.
\end{align}

For $k=m+1$, by the definition of the Hausdorff distance we have:
\begin{align}\label{hpf3}
  d_{m+1}(n) &  = \mathrm{dist}_{H_1}(V_{m+1}^{U_1}(n),V_{m+1}^{U_2}(n))\nonumber\\
   &  =\max_{u\in V_{m+1}^{U_1}(n)}\mathrm{dist}_{H_1}(u,V_{m+1}^{U_2}(n))\nonumber\\
    & = \mathrm{dist}_{H_1}(u_0,V_{m+1}^{U_2}(n))\nonumber\\
  &   = \mathrm{dist}_{H_1}(U_1(n-1)\bar{h}_1,V_{m+1}^{U_2}(n))\nonumber\\
   &  \leq \mathrm{dist}_{H_1}(U_1(n-1)\bar{h}_1,U_2(n-1)\bar{h}_2),
\end{align}
for some $u_0=U_1(n-1)\bar{h}_1$.

Then we can deduce the inequality \eqref{hpf1} for $k=m+1$ from \eqref{hpf2} and \eqref{hpf3}. Thus the assertion \eqref{hpf1} is proved by deduction.

Now we estimate the Hausdorff distance between $E_k^{U_1}(n)$ and  $E_k^{U_2}(n)$, define
\begin{equation*}
  D_k(n) :=  \mathrm{dist}_{H_1}(E_k^{U_1}(n),E_k^{U_2}(n)).
\end{equation*}
For any $n\in\mathbb{Z}$ and $k\in\mathbb{N}$, our purpose is to prove that
\begin{equation}\label{hpf4}
  D_k(n) \leq C\sup_{l\in(0,k]}\|U_1(n-l)-U_2(n-l)\|_{\mathbb{U}},
\end{equation}
where the constant $C$ depends only on $\mu,K$.
For $k=1$, we have $E_1^{U_1}(n)=V_1^{U_1}(n)$ and $E_1^{U_2}(n)=V_1^{U_2}(n)$, then
\begin{equation*}
  D_1(n) = d_1(n) \leq \|U_1(n-l)-U_2(n-l)\|_{\mathbb{U}}.
\end{equation*}

We assume that for any $n\in\mathbb{Z}$ the inequality \eqref{hpf4} holds for $k=m$.

For any $A_1,A_2\subset B(n)$, by applying the assumption \eqref{ass1} and the symmetry between $A_1$ and $A_2$ we can deduce the following inequality:
\begin{align}\label{hpf5}
  & \mathrm{dist}_{H_1}^{\mathrm{symm}}(U_1(n)A_1,U_2(n)A_2) \nonumber\\
    & \leq (\frac{1+\mu}{2}+K)\mathrm{dist}_{H_1}^{\mathrm{symm}}(A_1,A_2) + \|U_1(n)-U_2(n)\|_{\mathbb{U}}.
\end{align}
Then we estimate $D_{m+1}(n)$ by using \eqref{hpf5} and inequality \eqref{hpf4} for $k=m$. For $k=m+1$, we have:
\begin{eqnarray*}
  E_{m+1}^{U_1}(n)= V_{m+1}^{U_1}(n)\cup U_1(n-1)E_{m}^{U_1}(n-1), \\
  E_{m+1}^{U_2}(n)= V_{m+1}^{U_2}(n)\cup U_2(n-1)E_{m}^{U_2}(n-1),
\end{eqnarray*}
and
\begin{align*}
  &  D_{m+1}(n) =  \mathrm{dist}_{H_1}(E_{m+1}^{U_1}(n),E_{m+1}^{U_2}(n)) \\
   &  \leq \mathrm{dist}_{H_1}(V_{m+1}^{U_1}(n),V_{m+1}^{U_2}(n)) + \mathrm{dist}_{H_1}(U_1(n-1)E_{m}^{U_1}(n),E_{m}^{U_2}(n))\\
    & \leq C\sup_{l\in(0,m+1]}\|U_1(n-l)-U_2(n-l)\|_{\mathbb{U}},
\end{align*}
where the constant $C$ depends only on $\mu,K$.

Then the assertion \eqref{hpf4} is valid for any $n\in\mathbb{Z}$ and $k\in\mathbb{N}$. Since the relationship between $E_k^{U_1}(n)$ and $E_k^{U_2}(n)$ is symmetric, then we can obtain the same estimate for $\mathrm{dist}_{H_1}(E_k^{U_2}(n),E_k^{U_1}(n))$. Thus the proof of this lemma is complete.
\end{proof}
Then by repeating the proof of \textbf{Theorem 3.1} word for word in \cite{Zelik2005}, we can prove the H\"{o}lder continuity \eqref{con4}.

\begin{remark}
In the above argument, the bounded set \( B \) we have chosen is time-dependent, which is appropriate for non-autonomous dynamical systems, following the approach in \cite{Carvalho2013a}. Based on this, we construct the intermediate sets \(\{V_k(n)\}\) and \(\{E_k(n)\}\) to facilitate the construction of the exponential attractor. We not only demonstrate that these sets pullback attract all bounded sets, but also establish that the pullback attraction rate is exponential.
\end{remark}

\section{Uniform Exponential Attractors for the damped hyperbolic equation with sub-quintic growth}
\noindent

In this concluding section, we apply the constructive method we prposed in Theorem \ref{th1} to prove the stability of the non-autonomous dynamical system generated by the weakly damped hyperbolic equation with sub-quintic growth in a bounded domain. The aim of this section is to obtain the existence of uniform exponential attractors for the non-autonomous system and prove that the uniform attractors are H\"{o}lder continuous with respect to the varying external forces.

\subsection{Setting of the problem}
\noindent

Consider the following damped  hyperbolic equations in a smooth bounded domain $\Omega\subset\mathbb{R}^3$:
\begin{equation} \label{dw}
\begin{cases}
u_{tt} + u_t  -\Delta u = f(t,u)+g(x), \quad x\in\Omega,~t\in\mathbb{R}, \\
u(x,t) = 0,\quad   x\in\partial\Omega,~t\in\mathbb{R}, \\
u(x,\tau) = u_0(x),~u_t(x,\tau)=u_1(x),\quad   x\in\Omega.
\end{cases}
\end{equation}

We assume that the nonlinear term $f$ satisfies the following assumptions:
\begin{itemize}
  \item [1.] $f\in C^1(\mathbb{R})$ or local Lipschitz continuous.
  \item [2.] Growth condition:
  \begin{equation}\label{fg}
    |f(u)|\leq C (1+|u|^{5-\kappa}),
  \end{equation}
  where the constants $C>0$ and $0<\kappa<1$.
  \item [3.] Sign condition:
  \begin{equation}\label{sign}
    \limsup_{|u|\to\infty}\frac{f(u)}{u}<\lambda_1,
  \end{equation}
where $\lambda_1$ is the first eigenvalue of $-\Delta$ defined in $\Omega$ with homogeneous boundary condition.
\end{itemize}

In this section, we consider the solution of problem \eqref{dw} to be the Shatah-Struwe solution as mentioned in the introduction. The Shatah-Struwe solution was proposed to address the issue of non-uniqueness of weak solutions for the supercritical damped  hyperbolic equation. It ensures uniqueness under the extra estimates of condition of space-time regularity for energy weak solutions. Its definition is as follows:
\begin{definition}[\cite{Kalantarov2016}]
  A weak solution $u(t+\tau),$ $t\in[\tau,\tau+T]$ is called a Shatah-Struwe solution of Cauchy problem \eqref{dw} if the following additional regularity holds:
  \begin{equation}
    u\in L^4(\tau;\tau+T;L^{12}(\Omega)).
  \end{equation}
  \end{definition}

We also introduce an embedding inequality for space-time estimates as provided in \cite{Kalantarov2016}.

  \begin{lemma}[\cite{Kalantarov2016}]
  Assume $\Omega$ is a bounded open set, for all $r\in[0,1],$ if \\
  $v\in {L^{\frac{4}{r}}\big(0,T;L^{\frac{12}{2-r}}(\Omega)\big)},{L^4(0,T;L^{12}(\Omega))},{L^\infty(0,T:H^1(\Omega))},$ one always has the following interpolation inequality:
  \begin{equation}\label{inperpolation}
    \|v\|_{L^{\frac{4}{r}}\left(0,T;L^{\frac{12}{2-r}}(\Omega)\right)}\leq C\|v\|^r_{L^4(0,T;L^{12}(\Omega))} \|v\|^{1-r}_{L^\infty(0,T:H^1(\Omega))}.
  \end{equation}
\end{lemma}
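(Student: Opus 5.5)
The plan is to prove \eqref{inperpolation} by a pointwise-in-time Hölder interpolation in space, followed by Hölder's inequality in time. The space exponent $\frac{12}{2-r}$ is chosen so that
\[
\frac{2-r}{12}=\frac{r}{12}+\frac{1-r}{6}.
\]
So $L^{\frac{12}{2-r}}$ interpolates between $L^{12}$ (weight $r$) and $L^{6}$ (weight $1-r$). For a.e.\ $t$, the log-convexity of $L^p$ norms (Hölder) gives
\[
\|v(t)\|_{L^{\frac{12}{2-r}}(\Omega)}\le \|v(t)\|_{L^{12}(\Omega)}^{r}\,\|v(t)\|_{L^{6}(\Omega)}^{1-r}.
\]
In dimension three the Sobolev embedding $H^1(\Omega)\hookrightarrow L^6(\Omega)$ then gives
\[
\|v(t)\|_{L^{6}(\Omega)}\le C\|v(t)\|_{H^1(\Omega)}.
\]
For general $\Omega$ one works with $H^1_0$, or with a bounded smooth domain and an extension operator. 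This is the only place where the constant $C$ enters, and it depends only on $\Omega$.

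Next, raise the pointwise bound to the power $4/r$ and integrate over $(0,T)$. Bounding the $H^1$ factor by its supremum gives
\[
\int_0^T\|v\|_{L^{\frac{12}{2-r}}}^{4/r}\,dt\le C^{\frac{4(1-r)}{r}}\,\|v\|_{L^\infty(0,T;H^1)}^{\frac{4(1-r)}{r}}\int_0^T\|v\|_{L^{12}}^{4}\,dt .
\]
Taking the $r/4$ power yields exactly $C^{1-r}\|v\|^{r}_{L^4(L^{12})}\|v\|^{1-r}_{L^\infty(H^1)}$, which is the claim. The time exponents match because $\frac{r}{4/r}\cdot\frac{4}{r}$ bookkeeping gives $\frac{r}{4}=\frac{r}{4}+\frac{1-r}{\infty}$.

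The endpoint cases need separate handling, since the time exponent $4/r$ degenerates there. For $r=0$ the statement reads $\|v\|_{L^\infty(L^6)}\le C\|v\|_{L^\infty(H^1)}$, which is just the Sobolev embedding. For $r=1$ it is an identity. Measurability of $t\mapsto \|v(t)\|_{L^p}$ is standard for Bochner-measurable $v$.

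There is no real obstacle here. The only point requiring care is the validity of the Sobolev embedding, namely the three-dimensional setting and suitable boundary or regularity assumptions on $\Omega$, which are implicitly those of \eqref{dw}.
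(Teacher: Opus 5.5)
Your argument is correct and complete. Spatial H\"older interpolation with $\frac{2-r}{12}=\frac{r}{12}+\frac{1-r}{6}$, the three-dimensional embedding into $L^6$, and H\"older in time give exactly the claimed bound, with a constant $C^{1-r}\le\max\{1,C\}$ that is uniform in $r$. The paper gives no proof of its own; it simply imports the inequality from Kalantarov--Savostianov--Zelik, and your H\"older-plus-$H^1\subset L^6$ argument is the standard reasoning behind it.

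Your caveat about the domain is well placed. For an arbitrary bounded open set, $H^1(\Omega)$ need not embed into $L^6(\Omega)$; outward cusps are a counterexample. So the lemma as literally stated needs $\Omega\subset\mathbb{R}^3$ together with either $H^1_0$ or some boundary regularity. Both hold in the paper's setting of a smooth bounded domain with Dirichlet data.
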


Under the above assumptions, it is well known that for every $(u_1,u_2)\in H^1_0\times L^2,$ equation \eqref{dw} possesses a unique Shatah-Struwe solution $u(t),$ $t\geq \tau$ (see \cite{Blair2009,Kalantarov2016}). Thus equation \eqref{dw} generates a dynamical process $\{U_f(t,\tau),t,\tau\in\mathbb{R},t\geq\tau\}$ in the phase space $H^1_0\times L^2$ by
\begin{equation*}
  U_f(t,\tau)(u_1,u_2)=(u(t),u_t(t)),
\end{equation*}
where $u(t)$ is the Shatah-Struwe solution to \eqref{dw} with initial data $$(u(\tau,\cdot),u_t(\tau,\cdot))=(u_0,u_1).$$
From Theorem 3.4 in \cite{Lidd2021}, for every $r_0\geq 0,$ there exists a positive constant $R_0$ and $t_0=t_0(r_0)$ such that for every $T>t_0$ and every Shatah-Struwe solution $u(t)$ to \eqref{dw} with $\|\nabla u_0\|^2+\|u_1\|^2\leq r_0^2,$ we have the following estimate:
\begin{equation}
  \|u_t(T)\|^2+\|\nabla u(T)\|^2\leq R_0^2.
\end{equation}
Then we can define the absorbing set for $U_f$ by
\begin{equation}
  B(t)\equiv B=\left\{(u,v)\in H^1_0\times L^2 \Big| \|u_t\|^2+\|\nabla u\|^2 \leq R_0^2 \right\}.
\end{equation}

From Theorem 3.5 in \cite{Lidd2021}, we can conclude that the Strichartz estimate for the Shatah-Struwe solution $u(t)$ of equation \eqref{dw} is independent of the specific choice of $f$  and depends only on the constant $C$ in \eqref{fg}.

To obtain the squeezing and the smoothing property for the dynamical system, we split it into two parts:
\begin{equation}
  U(t,\tau)=S(t,\tau)+C(t,\tau),
\end{equation}
where $S(t,\tau)(u_0,u_1)=(v(t),v_t(t))$ solves
\begin{equation}\label{equ_contraction}
  \begin{cases}
    v_{tt}+v_t-\Delta v=0,\quad  x\in\Omega,\\
    v(\cdot,\tau)=u_0,~v_t(\cdot,\tau)=u_1,\\
    v|_{\partial\Omega}=0,
  \end{cases}
\end{equation}
and the reminder $C$ satisfies
\begin{equation}\label{equ_smoothing}
  \begin{cases}
    w_{tt}+w_t-\Delta w=f(t,u)+g(x), \quad x\in\Omega,\\
    w(\cdot,\tau)=0,~w_t(\cdot,\tau)=0,\\
    w|_{\partial\Omega}=0,
  \end{cases}
\end{equation}

The squeezing property is an essential part for a non-compact dynamical system. It shows the part which is not regular(or compact) will decay to zero.
\begin{lemma}[Squeezing]\label{lemma_1}
  Let the above assumptions hold. $B$ is the absorbing set for $U(t,\tau).$ Assume $(v,v_t)$ and $(\tilde{v},\tilde{v}_t)$ are two Shatah-Struwe solutions to \eqref{equ_contraction} with initial $(u_0,u_1)$ and $(\tilde{u}_0,\tilde{u}_1)$ respectively. If $(u_0,u_1)$, $(\tilde{u}_0,\tilde{u}_1)\in B,$ then the difference $v-\tilde{v}$ satisfies
  \begin{align}\label{contration}
          &\|v-\tilde{v}\|^2_{H^1_0} +\|v_t-\tilde{v}_t\|^2_{L^2}+\|v-\tilde{v}\|_{L^4(t,t+1;L^{12}(\Omega))} \nonumber\\
          \leq &ce^{-\alpha_1(t-\tau)}\left(\|u_0-\tilde{u}_0\|^2_{H^1_0} +\|u_1-\tilde{u}_1\|^2_{L^2}\right).
  \end{align}

\end{lemma}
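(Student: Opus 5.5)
The equation \eqref{equ_contraction} is linear and homogeneous, so the difference $z=v-\tilde v$ solves the same problem
\begin{equation*}
z_{tt}+z_t-\Delta z=0,\qquad z(\tau)=u_0-\tilde u_0,\quad z_t(\tau)=u_1-\tilde u_1,\qquad z|_{\partial\Omega}=0 .
\end{equation*}
The task therefore reduces to two linear estimates for the damped wave equation: exponential decay of the energy, and a Strichartz bound on unit time intervals that inherits this decay. The nonlinearity $f$ plays no role here. Membership of the data in $B$ only guarantees that all quantities are finite; it does not enter the constants.

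\textbf{Energy decay.} I would use the standard perturbed energy functional
\begin{equation*}
\mathcal E_\varepsilon(t)=\|z_t\|^2+\|\nabla z\|^2+\varepsilon(z,z_t), \qquad \varepsilon>0 \text{ small}.
\end{equation*}
Multiplying the equation by $z_t+\varepsilon z$ and integrating over $\Omega$ gives
\begin{equation*}
\frac{d}{dt}\mathcal E_\varepsilon+(2-\varepsilon)\|z_t\|^2+2\varepsilon\|\nabla z\|^2-2\varepsilon(z,z_t)=0 .
\end{equation*}
I would then control the cross term by Poincar\'e's inequality $\lambda_1\|z\|^2\le\|\nabla z\|^2$ together with Young's inequality. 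For $\varepsilon$ small, depending only on $\lambda_1$, this yields two facts:
\begin{equation*}
\mathcal E_\varepsilon\sim\|z_t\|^2+\|\nabla z\|^2, \qquad \frac{d}{dt}\mathcal E_\varepsilon+2\alpha_1\mathcal E_\varepsilon\le0 .
\end{equation*}
Gronwall's inequality then gives
\begin{equation*}
\|\nabla z(t)\|^2+\|z_t(t)\|^2\le c\,e^{-2\alpha_1(t-\tau)}\bigl(\|u_0-\tilde u_0\|_{H^1_0}^2+\|u_1-\tilde u_1\|^2\bigr).
\end{equation*}

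\textbf{Strichartz part.} Here I would invoke the linear Strichartz estimate on bounded smooth domains of Blair--Smith--Sogge \cite{Blair2009}, in the damped form used in \cite{Kalantarov2016}. For the linear damped wave equation on a unit time interval it reads
\begin{equation*}
\|z\|_{L^4(t,t+1;L^{12}(\Omega))}\le C\bigl(\|\nabla z(t)\|+\|z_t(t)\|\bigr),
\end{equation*}
with $C$ independent of $t$. The damping term $z_t$ can be moved to the right-hand side and treated as a source in $L^1(t,t+1;L^2)$ bounded by the energy, so the undamped estimate suffices.

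Applying this bound with the interval starting at time $t$, and combining it with the energy decay, gives
\begin{equation*}
\|z\|_{L^4(t,t+1;L^{12})}\le C e^{-\alpha_1(t-\tau)}\bigl(\|u_0-\tilde u_0\|_{H^1_0}+\|u_1-\tilde u_1\|\bigr).
\end{equation*}
Adding this to the energy bound yields \eqref{contration}, possibly after adjusting the constant $c$ and halving the rate $\alpha_1$.

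\textbf{Expected difficulties.} The Strichartz term appears unsquared in \eqref{contration} while the right-hand side is quadratic in the data, so the inequality is not homogeneous as written. I would read it with the Strichartz norm squared, or equivalently state it with square roots on the right. If instead the unsquared form is meant literally, it follows on $B$ only with a constant depending on $R_0$, since there the data differences are bounded by $2R_0$.

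The main obstacle is justifying the Strichartz estimate on a bounded domain with Dirichlet boundary conditions. This step is not proved from scratch; I rely on \cite{Blair2009,Kalantarov2016}, and the only care needed is the perturbative handling of the damping term and the uniformity of the constant $C$ in the starting time.
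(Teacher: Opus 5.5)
Your argument is the paper's argument. You use the same multiplier $z_t+\varepsilon z$ (the paper writes $y_t+\delta y$), with Poincar\'e, Young and Gronwall giving energy decay, followed by the local Blair--Smith--Sogge Strichartz estimate. Your explicit way of combining the two steps is what the paper's last step actually needs: you apply the unit-interval Strichartz bound with data taken at time $t$, then insert the energy decay from $\tau$ to $t$. As written, the paper's display \eqref{contro_Strichartz} carries $e^{\sigma_2(t-\tau)}$ with $\sigma_2>0$, which by itself gives no decay.

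There is a minor slip in your energy identity. For $\mathcal E_\varepsilon=\|z_t\|^2+\|\nabla z\|^2+\varepsilon(z,z_t)$ the multiplier is $2z_t+\varepsilon z$, and the identity is $\frac{d}{dt}\mathcal E_\varepsilon+(2-\varepsilon)\|z_t\|^2+\varepsilon\|\nabla z\|^2+\varepsilon(z,z_t)=0$. This is harmless, because the cross term is absorbed by a weighted Young inequality and Poincar\'e either way.

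One claim in your last paragraph is wrong: the literal, unsquared form does not follow on $B$ with an $R_0$-dependent constant. A bound $\|d\|\le 2R_0$ on the data difference $d$ gives $\|d\|^2\le 2R_0\|d\|$. That controls the quadratic quantity by the linear one, which is the opposite of what you would need.

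In fact the literal inequality is false, by homogeneity of the linear problem. Take $\tilde v\equiv 0$ (note $(0,0)\in B$) and data $\lambda(\phi_1,0)$, where $\phi_1$ is the first Dirichlet eigenfunction. The solution is $z=\lambda a(t)\phi_1$ with $a''+a'+\lambda_1 a=0$, $a(\tau)=1$, $a'(\tau)=0$. The left side is then at least $\lambda\|a\|_{L^4(t,t+1)}\|\phi_1\|_{L^{12}}>0$, while the right side equals $c\,e^{-\alpha_1(t-\tau)}\lambda^2\|\nabla\phi_1\|^2$. Letting $\lambda\to 0$ gives a contradiction for any fixed $c$.

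So reading the Strichartz term squared, or taking square roots on the right, is not optional. It is the only true version of the statement, and it is exactly the version your argument proves.
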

\begin{proof}
  Denote $y=v-\tilde{v},$ then it solves
  \begin{equation}
    y_{tt}+y_t-\Delta y=0.
  \end{equation}
  Multiply $y_t+\delta y$ on the above equation and integrate over $\Omega,$ it follows
  \begin{equation*}
    \frac{\mathrm{d}}{\mathrm{d}t}\left(\frac{1}{2}\|y_t\|^2+\frac{1}{2}\|\nabla y\|^2+\delta(y,y_t)\right) +(1-\delta)\|y_t\|^2 +\delta\|\nabla y\|^2 +\delta(y,y_t)=0.
  \end{equation*}
  Thus one can derive
  \begin{equation*}
    \frac{\mathrm{d}}{\mathrm{d}t}\left(\frac{1}{2}\|y_t\|^2+\frac{1}{2}\|\nabla y\|^2+\delta(y,y_t)\right) +\sigma_1\left(\frac{1}{2}\|y_t\|^2+\frac{1}{2}\|\nabla y\|^2+\delta(y,y_t)\right)\leq 0,
  \end{equation*}
  where the positive constant $\sigma_1$ depends on $\delta,$ but is independent of $y,y_t,t,\tau.$ Making use of Gronwall's inequality and Schwartz inequality, with a small enough $\delta,$ one has
  \begin{equation}\label{contro_energy}
    \|y_t(t)\|^2+\|\nabla y(t)\|^2\leq ce^{-\sigma_1(t-\tau)}\left(\|u_0-\tilde{u}_0\|^2_{H^1_0} +\|u_1-\tilde{u}_1\|^2_{L^2}\right).
  \end{equation}
  From Theorem 1.1 in \cite{Blair2009}, the local(with respect to time) Strichartz estimate holds for $y.$ Following a analogous discussion in \cite{Kalantarov2016}, one could obtain the following Strichartz estimate: for every $t\in\mathbb{R},$
  \begin{equation}\label{contro_Strichartz}
    \|v\|_{L^4(t,t+1;L^{12}(\Omega))}\leq ce^{\sigma_2(t-\tau)}\left(\|u_0-\tilde{u}_0\|_{H^1_0} +\|u_1-\tilde{u}_1\|_{L^2}\right),
  \end{equation}
  where $\sigma_2>0$ is a constant.
  Combining \eqref{contro_energy} and \eqref{contro_Strichartz}, we can prove energy decay estimate \eqref{contration}.
\end{proof}

From \cite{Carvalho2013,Chang2020,Chueshov2015}(for detail, Section 6 in \cite{Chang2020} and Chapter 3 in \cite{Chueshov2015}), it is shown that the squeezing and smoothing properties are sufficient to prove the finiteness of fractal dimensions. As discussed in Section 3, the smoothing property also ensures the finite covering of the absorbing set, which allows for the construction of exponential attractors.

The following lemma will demonstrate the smoothing property for the dynamical system generated by \eqref{dw}.
\begin{lemma}[Smoothing]\label{lemma_2}
  Let the assumptions in the above lemma hold. $B$ is the absorbing set for $U(t,\tau).$ For any $(u_0,u_1),(\tilde{u}_0,\tilde{u}_1)\in B,$ $w$ is the corresponding Shatah-Struwe solution for equation \eqref{equ_smoothing} with $(u(\tau,x),u_t(\tau,x)=(u_0,u_1))$ and $(\tilde{u}(\tau,x),\tilde{u}_t(\tau,x)=(\tilde{u}_0,\tilde{u}_1)).$ Then there exist positive constant $c$ and $\alpha_2$ such that the difference $w-\tilde{w}$ satisfies
  \begin{equation}
    \begin{aligned}
       \|w-\tilde{w}\|^2_{H^1_0} +\|w_t-\tilde{w}_t\|^2_{L^2}+\|w-\tilde{w}\|_{L^4(t,t+1;L^{12}(\Omega))}\\
      \leq  ce^{-\alpha_2(t-\tau)}\left(\|w_0-\tilde{w}_0\|^2_{H^1_0} +\|w_1-\tilde{w}_1\|^2_{L^2}\right).
    \end{aligned}
  \end{equation}
\end{lemma}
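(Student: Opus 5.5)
The approach is to work with the difference $z=w-\tilde w$. Since $w$ solves \eqref{equ_smoothing} with zero initial data, the same holds for $\tilde w$, and by linearity
\begin{equation*}
z_{tt}+z_t-\Delta z=f(t,u)-f(t,\tilde u),\qquad z(\tau)=0,\quad z_t(\tau)=0,
\end{equation*}
where $u=v+w$ and $\tilde u=\tilde v+\tilde w$ are the full Shatah--Struwe solutions starting from $(u_0,u_1)$ and $(\tilde u_0,\tilde u_1)$. The right-hand side of the claimed estimate is then read as the initial distance $\|u_0-\tilde u_0\|^2_{H^1_0}+\|u_1-\tilde u_1\|^2_{L^2}$. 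The aim is to show that $z$ is controlled by this distance on every unit time interval, which is what the smoothing part $C$ in Definition \ref{def1} requires once a compact target space is identified.

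\textbf{Energy estimate.} Multiply the $z$-equation by $z_t+\delta z$, exactly as in the proof of Lemma \ref{lemma_1}. This gives
\begin{equation*}
\frac{\mathrm{d}}{\mathrm{d}t}\mathcal E_\delta(z)+\sigma_1\mathcal E_\delta(z)\le \bigl|(f(u)-f(\tilde u),z_t+\delta z)\bigr|,
\end{equation*}
with $\mathcal E_\delta(z)=\frac12\|z_t\|^2+\frac12\|\nabla z\|^2+\delta(z,z_t)$. For the nonlinear term I use $|f(u)-f(\tilde u)|\le C(1+|u|^{4-\kappa}+|\tilde u|^{4-\kappa})|u-\tilde u|$ together with H\"older's inequality:
\begin{equation*}
\|(f(u)-f(\tilde u))\|_{L^2}\le C\bigl(1+\|u\|^{4-\kappa}_{L^{12}}+\|\tilde u\|^{4-\kappa}_{L^{12}}\bigr)\|u-\tilde u\|_{L^6}.
\end{equation*}
The Strichartz bound from Theorem 3.5 of \cite{Lidd2021} and the interpolation inequality \eqref{inperpolation} show that $\ell(t):=1+\|u\|^{4}_{L^{12}}+\|\tilde u\|^{4}_{L^{12}}$ lies in $L^1(t,t+1)$ uniformly on $B$. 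I then write $u-\tilde u=(v-\tilde v)+z$. The $v$-part is bounded by Lemma \ref{lemma_1}, and the $z$-part is absorbed by Gronwall's inequality with the integrable weight $\ell$. The exponent $4-\kappa<4$ gives the slack to use Young's inequality and keep the constant independent of $t-\tau$. Together with the exponential decay from $\sigma_1$, this yields exponential control of $\|z\|^2_{H^1_0}+\|z_t\|^2$ by the initial distance. Any uniform-in-time growth should be absorbed by choosing $\delta$ small, and the argument must compensate using the decay of the $v$-difference.

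\textbf{Strichartz norm.} Next, apply the local Strichartz estimate of \cite{Blair2009} on $[t,t+1]$ to the $z$-equation:
\begin{equation*}
\|z\|_{L^4(t,t+1;L^{12})}\le C\bigl(\|(z,z_t)(t)\|_{H^1_0\times L^2}+\|f(u)-f(\tilde u)\|_{L^1(t,t+1;L^2)}\bigr).
\end{equation*}
The last term is bounded as above by $\int\ell\,\|u-\tilde u\|_{H^1}$, which gives the Strichartz part of the estimate.

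\textbf{Main obstacle.} The hardest step is the Gronwall step. The weight $\ell$ is only integrable on unit intervals, so it is not small, and a naive Gronwall bound produces $e^{C(t-\tau)}$ rather than decay. My plan is to split the interval into subintervals on which $\int\ell$ is small, as in \cite{Kalantarov2016}, using the subcriticality $\kappa>0$. If genuine decay cannot be closed, I would settle for the estimate on a fixed time $T$. That is enough for the discrete process $U(n)=U(nT+T,nT)$, since Definition \ref{def1} only requires a Lipschitz bound in the weaker norm $H$. I would get the weaker-norm form by measuring $u-\tilde u$ in $L^2\times H^{-1}$ via a compactness/interpolation argument, and obtain compactness by estimating $z$ in $H^{1+\epsilon}\times H^{\epsilon}$ with the same multiplier applied to $(-\Delta)^{\epsilon}$.
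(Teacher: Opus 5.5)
There is a genuine gap, and it sits at the step you flag as the main obstacle. The exponential decay cannot be closed, because the inequality is false.

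Read literally, the right-hand side is zero, since \eqref{equ_smoothing} gives $w_0=\tilde w_0=0$ and $w_1=\tilde w_1=0$. Your reading of it as $\|u_0-\tilde u_0\|^2_{H^1_0}+\|u_1-\tilde u_1\|^2_{L^2}$ is the only sensible one, but it fails too. Since $w-\tilde w=(u-\tilde u)-(v-\tilde v)$ and $v-\tilde v$ decays by Lemma \ref{lemma_1}, the claim would force any two trajectories starting in $B$ to converge exponentially to each other. Here is a counterexample:
\begin{itemize}
\item Take $g=0$ and $f(u)=\lambda u-u^3$ with $\lambda>\lambda_1$. This satisfies \eqref{fg} and \eqref{sign}.
\item Then $0$ and a nontrivial equilibrium $\phi$ both lie in $B$. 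Here $\phi$ is a minimizer of the stationary energy, which is negative along small multiples of the first eigenfunction. Both lie in $B$ because $B$ is absorbing and equilibria are fixed.
\item For $\tilde u\equiv 0$ you get $\tilde v=\tilde w\equiv 0$. For $u\equiv\phi$ you get $w(t)=\phi-v(t)\to\phi$ in $H^1_0$.
\end{itemize}
So the left-hand side tends to $\|\nabla\phi\|^2>0$ while the right-hand side tends to $0$.

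The failure is already visible in your energy step. The term $\ell(s)\|z\|_{H^1_0}\|z_t\|$ enters Gronwall with a weight that is bounded, but not small, on unit intervals. The slack from $\kappa>0$ only trades $\|u\|^{4-\kappa}_{L^{12}}$ for $\varepsilon\|u\|^4_{L^{12}}+C_\varepsilon$. That leaves an $O(1)$ Lipschitz constant of $f$ which the damping rate $\sigma_1$ cannot beat; in the example, $f'(0)=\lambda>\lambda_1$ gives an exponentially growing linear mode. Choosing $\delta$ small only shrinks $\sigma_1$. Splitting $[\tau,t]$ into short pieces where $\int\ell$ is small controls growth on each piece, but the number of pieces grows like $t-\tau$, so you still get $e^{C(t-\tau)}$. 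Your sentence asserting exponential control of $\|z\|^2_{H^1_0}+\|z_t\|^2$ via $\sigma_1$ is therefore wrong, and no refinement will repair it.

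What does hold is the non-decaying, fixed-time smoothing estimate in your fallback, and that is all the paper's argument actually delivers. The paper:
\begin{itemize}
\item bounds the $H^\delta$-norm of the nonlinear difference by interpolating between $W^{1,6/5}$ and $L^{10/(5-\kappa)}$;
\item applies $(-\Delta)^{\delta/2}$ with the multiplier $(-\Delta)^{\delta/2}y_t+\delta(-\Delta)^{\delta/2}y$, then uses Gronwall;
\item ends with a bound on $\|(-\Delta)^{\delta/2}y_t\|^2+\|y\|^2_{H^{1+\delta}}$ by a constant times the initial $H^1_0\times L^2$ distance.
\end{itemize}
No factor $e^{-\alpha_2(t-\tau)}$ appears and the $L^4(L^{12})$ term is never estimated, so the stated decay is not proved there either.

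If you pursue the fallback, aim for exactly that form: target space $H^{1+\delta}\times H^\delta$, Lipschitz in the phase-space norm, constants depending on the time step $T$. The variant you sketch, a Lipschitz bound from $L^2\times H^{-1}$, is not what compactness or interpolation gives. Interpolating a weak-norm bound against the $H^{1+\epsilon}$ bound yields only $\|z\|_{H^1_0}\lesssim\|h_1-h_2\|_{H}^{\theta}\|h_1-h_2\|_{H_1}^{1-\theta}$. That is a H\"older dependence on the weak norm, not the Lipschitz bound required in Definition \ref{def1}.
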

\begin{proof}
  Denote $y=w-\tilde{w},$ then it solves
  \begin{equation}\label{equ_y}
    y_{tt}+y_t-\Delta y+f(w,t)-f(\tilde{w},t)=0.
  \end{equation}
  From the interpolation inequality,
  \begin{equation*}
    \|f(w)-f(\tilde{w})\|_{H^\delta}\leq \|f(w)-f(\tilde{w})\|^{1-\theta}_{W^{1,6/5}}\|f(w)-f(\tilde{w})\|^\theta_{L^{10/5-\kappa}}.
  \end{equation*}
  Hence we can derive the $H^\delta$-norm if the right hand side of the above inequality is bounded.
  \begin{equation*}
    \begin{aligned}
      \|f(w)-f(\tilde{w})\|_{W^{1,6/5}}&\leq   c\left(\int_\Omega|(1+|w|^{3-\kappa}+|\tilde{w}|^{3-\kappa})(\nabla w+\nabla \tilde{w})y|^{\frac{6}{5}}\mathrm{d}x\right)^{\frac{5}{6}} \\
       & +c\left(\int_\Omega\left|(1+|w|^{4-\kappa}+|\tilde{w}|^{4-\kappa})|\nabla y|\right|^{\frac{6}{5}}\mathrm{d}x\right)^{\frac{5}{6}}\\
    &  = I_1+I_2.
    \end{aligned}
  \end{equation*}
  Due to H\"older's inequality,
  \begin{equation*}
    \begin{aligned}
      I_1&\leq  c\left(1+\|w\|^3_{L^{12}}+\|\tilde{w}\|^3_{L^{12}}\right)(\|\nabla w\|_{L^6}+\|\nabla \tilde{w}\|_{L^6})\|y\|_{L^{\frac{12}{5}}}\\
      &\leq  c\left(1+\|w\|^3_{L^{12}}+\|\tilde{w}\|^3_{L^{12}}\right)(\|w\|_{H^2}+\|\tilde{w}\|_{H^2})\|y\|_{H^1_0},
    \end{aligned}
  \end{equation*}
  and
  \begin{equation*}
    I_2\leq c\left(1+\|w\|^4_{L^{12}}+\|\tilde{w}\|^4_{L^{12}}\right)\|y\|_{H^1_0}.
  \end{equation*}
  Hence
  \begin{equation}
    \|f(w)-f(\tilde{w})\|_{W^{1,6/5}}\leq c\left(1+\|w\|^4_{L^{12}}+\|\tilde{w}\|^4_{L^{12}}\right)\|y\|_{H^1_0}.
  \end{equation}
  On the other hand,
  \begin{equation}
    \begin{aligned}
      \|f(w)-f(\tilde{w})\|_{L^{10/5-\kappa}}&\leq  \|f(w)\|_{L^{10/5-\kappa}}+\|f(\tilde{w})\|_{L^{10/5-\kappa}}\\
     & \leq  C\left(1+\|w\|^{5-\kappa}_{L^5(t,t+1;L^{10})} +\|\tilde{w}\|^{5-\kappa}_{L^5(t,t+1;L^{10})}\right)\\
     & \leq  C.
    \end{aligned}
  \end{equation}
  Hence we can conclude that
  \begin{equation}
    \|f(w)-f(\tilde{w})\|_{H^\delta}\leq c\left(1+\|w\|^4_{L^{12}}+\|\tilde{w}\|^4_{L^{12}}\right)\|y\|_{H^1_0}.
  \end{equation}
  Step 3: applying $(-\Delta)^{\frac{\delta}{2}}$ to \eqref{equ_y}, and multiplying $(-\Delta)^{\frac{\delta}{2}}y_t+\delta(-\Delta)^{\frac{\delta}{2}}y,$ one has
  \begin{equation*}
    \begin{aligned}
       \frac{\mathrm{d}}{\mathrm{d}t}\left(\frac{1}{2}\|(-\Delta)^{\frac{\delta}{2}}y_t\|^2 +\frac{1}{2}\|y\|^2_{H^{1+\delta}} +\delta((-\Delta)^{\frac{\delta}{2}}y_t,(-\Delta)^{\frac{\delta}{2}}y)\right)\\
       +(1-\delta)\|(-\Delta)^{\frac{\delta}{2}}y_t\|^2 +\delta\|y\|^2_{H^{1+\delta}}+\delta((-\Delta)^{\frac{\delta}{2}}y_t,(-\Delta)^{\frac{\delta}{2}}y)\\
      = \left((-\Delta)^{\frac{\delta}{2}}f(\tilde{w})-(-\Delta)^{\frac{\delta}{2}}f(w), (-\Delta)^{\frac{\delta}{2}}y_t+\delta(-\Delta)^{\frac{\delta}{2}}y\right).
    \end{aligned}
  \end{equation*}
  Denote $E_y=\frac{1}{2}\|(-\Delta)^{\frac{\delta}{2}}y_t\|^2 +\frac{1}{2}\|y\|^2_{H^{1+\delta}} +\delta((-\Delta)^{\frac{\delta}{2}}y_t,(-\Delta)^{\frac{\delta}{2}}y),$ then due to H\"older's inequality and the growth condition \eqref{fg}, one could obtain
  \begin{equation*}
    2\sqrt{E+\beta}\frac{\mathrm{d}}{\mathrm{d}t}\sqrt{E+\beta}+2\sigma_3(E+\delta)\leq c\left(1+\|w\|^4_{L^{12}}+\|\tilde{w}\|^4_{L^{12}}\right)\|y\|_{H^1_0}+2\sigma_3\delta.
  \end{equation*}
  Then taking advantage of Gronwall's inequality and passing $\beta\to 0^+,$ one has
  \begin{equation}
    \begin{aligned}
      \|(-\Delta)^{\frac{\delta}{2}}y_t(t)\|^2 +\|y(t)\|^2_{H^{1+\delta}}&\leq  C\int^t_\tau e^{-\sigma_3(t-s)}\left(1+\|w\|^4_{L^{12}}+\|\tilde{w}\|^4_{L^{12}}\right)\|y\|_{H^1_0}\mathrm{d}s\\
    &  \leq  C\|y\|_{L^\infty(\tau,t;H^1_0)}.
    \end{aligned}
  \end{equation}
  Multiplying $y_t$ on \eqref{equ_y} and doing a similar discussion, it follows that
  \begin{equation}
    \|y_t(t)\|^2+\|y(t)\|^2_{H^1_0}\leq C\|y_t(\tau)\|^2+\|y(\tau)\|^2_{H^1_0}.
  \end{equation}
  Consequently,
  \begin{equation*}
    \left\|(-\Delta)^{\frac{\delta}{2}}y_t(t)\right\|^2 +\|y(t)\|^2_{H^{1+\delta}}\leq C\|y_t(\tau)\|^2+\|y(\tau)\|^2_{H^1_0}.
  \end{equation*}
  Thus we complete the proof.
\end{proof}

Moreover, the following natural Lipschitz continuity holds for solutions of equation \eqref{dw} with differential external forces $g,\bar{g}\in L^1(\tau,t;L^2(\Omega))$ (see \cite{Zelik2005}).
\begin{lemma}\label{Holder_continuity}
  Let the assumptions in Lemma \ref{lemma_1} hold, $u$ and $\bar{u}$ are two Shatah-Struwe solutions to \eqref{dw} with $g$ being replaced by $g$ and $\bar{g}$ respectively. Their initial data are $(u_0,u_1)$ and $(\bar{u}_0,\bar{u}_1).$ If $(u_0,u_1)=(\bar{u}_0,\bar{u}_1),$ it holds that
  \begin{equation}\label{diff}
    \|u_t-\bar{u}_t\|_{L^2}+\|u-\bar{u}\|_{H^1_0}+\|u-\bar{u}\|_{L^4(\tau,t;L^{12}(\Omega))}\leq C\int^t_\tau e^{L(t-s)}\|g-\bar{g}\|_{L^2}\mathrm{d}s.
  \end{equation}
\end{lemma}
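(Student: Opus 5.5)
The plan is to work with the difference $z=u-\bar u$ and a weighted energy functional, as in the proof of Lemma \ref{lemma_1}, and to treat the nonlinear term as a perturbation controlled by the Strichartz norms of $u$ and $\bar u$. Since $(u_0,u_1)=(\bar u_0,\bar u_1)$, the difference satisfies
\begin{equation*}
z_{tt}+z_t-\Delta z=f(t,u)-f(t,\bar u)+g-\bar g,\qquad z(\tau)=0,\ z_t(\tau)=0 .
\end{equation*}
\textbf{Energy step.} Multiply this equation by $z_t$ (the $\delta z$ correction is not needed, because an exponentially growing bound is acceptable) and set $E_z(t)=\|z_t\|^2+\|\nabla z\|^2$.

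The nonlinear term is estimated through the growth condition \eqref{fg}, now applied to $f'$ in the form $|f'(s)|\le C(1+|s|^{4-\kappa})$, together with H\"older's inequality and the Sobolev embedding $H^1_0\subset L^6$:
\begin{equation*}
|(f(u)-f(\bar u),z_t)|\le C\left(1+\|u\|^{4}_{L^{12}}+\|\bar u\|^{4}_{L^{12}}\right)\|z\|_{L^6}\|z_t\|\le C\,a(t)\,E_z(t),
\end{equation*}
where $a(t)=1+\|u(t)\|^4_{L^{12}}+\|\bar u(t)\|^4_{L^{12}}$. The forcing term is bounded by $\|g-\bar g\|\,E_z^{1/2}$. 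Dividing by $E_z^{1/2}$, with the same $\sqrt{E+\beta}$ regularisation used in Lemma \ref{lemma_2}, gives
\begin{equation*}
\frac{d}{dt}E_z^{1/2}\le C a(t)E_z^{1/2}+\|g-\bar g\| .
\end{equation*}
Two facts control $a$. Both solutions start in $B$, and by the uniform energy and Strichartz bounds (Theorem 3.5 of \cite{Lidd2021}) we have $\int_s^{s+1}a(r)\,dr\le C$ uniformly in $s$. Hence $\int_s^t a(r)\,dr\le C(t-s+1)$, and Gronwall's inequality yields
\begin{equation*}
E_z^{1/2}(t)\le C\int_\tau^t e^{L(t-s)}\|g-\bar g\|\,ds .
\end{equation*}
This controls the first two terms on the left of \eqref{diff}.

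\textbf{Strichartz step.} For the $L^4L^{12}$ norm, split $[\tau,t]$ into unit intervals $[s,s+1]$. On each interval, apply the local Strichartz estimate of \cite{Blair2009} to the linear damped wave equation with right-hand side $F=f(u)-f(\bar u)+g-\bar g$:
\begin{equation*}
\|z\|_{L^4(s,s+1;L^{12})}\le C\left(E_z^{1/2}(s)+\|F\|_{L^1(s,s+1;L^2)}\right).
\end{equation*}
The nonlinear part satisfies
\begin{equation*}
\|f(u)-f(\bar u)\|_{L^1L^2}\le C\int_s^{s+1}\left(1+\|u\|^4_{L^{12}}+\|\bar u\|^4_{L^{12}}\right)\|z\|_{L^6}\,dr\le C\sup_{[s,s+1]}E_z^{1/2}.
\end{equation*}
This is again bounded by the energy estimate already obtained. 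Summing over the unit intervals and absorbing the polynomial factor into a larger $L$ gives the third term.

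\textbf{Main obstacle.} The hardest step is the pointwise estimate $\|f(u)-f(\bar u)\|\le C a(t)\|z\|_{H^1}$. The point is that $\|u\|^{4-\kappa}_{L^{12}}$ is only $L^1$-integrable in time, not bounded. The argument must therefore use integrability of $a$ inside Gronwall's inequality, and not a uniform Lipschitz constant. I would first verify that the exponent bookkeeping works in the sub-quintic case: use $|u|^{4-\kappa}\le 1+|u|^4$ and place $|u|^4$ in $L^3$ (that is, $u$ in $L^{12}$), $z$ in $L^6$ and $z_t$ in $L^2$; the H\"older exponents $\frac13+\frac16+\frac12=1$ close correctly.
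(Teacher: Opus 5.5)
Your proposal is correct and follows the same route as the paper: multiply the difference equation by $z_t$ and apply Gronwall, then use the Blair--Smith--Sogge Strichartz estimate with $\|f(u)-f(\bar u)\|_{L^1L^2}$ controlled by the $L^4L^{12}$ norms and the energy bound. It is in fact more careful than the paper's proof, which replaces the pointwise factor $\|u(t)\|_{L^{12}}^4$ by a space-time norm to get a constant $L$, and via Young's inequality reaches $\bigl(\int_\tau^t e^{L(t-s)}\|g-\bar g\|^2\,ds\bigr)^{1/2}$ rather than the right-hand side of \eqref{diff}. Your Gronwall argument with the merely integrable coefficient $a(t)$, together with the division by $E_z^{1/2}$, produces exactly \eqref{diff}.

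The one step to tighten is the final summation. The factor $t-\tau+1$ cannot be absorbed into $e^{L(t-s)}$ for $s$ close to $t$. Instead, on each unit interval use $\sup_{[s_j,s_j+1]}E_z^{1/2}\le C\int_\tau^{s_j+1}e^{L(s_j+1-s)}\|g-\bar g\|\,ds$ and sum the resulting geometric series in $j$; this gives the third term with the same $L$.
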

\begin{proof}
  Denote $y=u-\bar{u},$ then it solves
  \begin{equation}
    y_{tt}+y_t-\Delta y=f(u)-f(\bar{u})+g-\bar{g}.
  \end{equation}
  Multiplying $y_t,$ due to H\"older's inequality and Young's inequality, it yields that
  \begin{equation*}
    \begin{aligned}
      &\frac{\mathrm{d}}{\mathrm{d}t}\left(\frac{1}{2}\|y_t\|^2+\frac{1}{2}\|\nabla y\|^2\right)\\
      \leq & C\int_\Omega (1+|u|^{4-\kappa}+|\bar{u}|^{4-\kappa})yy_t\mathrm{d}x +\|g-\bar{g}\|\|y_t\|\\
     \leq   &C\left(1+\|u\|^{4}_{L^4(\tau,t;L^{12})}+\|\bar{u}\|^{4}_{L^4(\tau,t;L^{12})}\right)\|y\|_{L^6}\|y_t\|  +\|g-\bar{g}\|\|y_t\|\\
      \leq & L\left(\frac{1}{2}\|y_t\|^2+\frac{1}{2}\|\nabla y\|^2\right) +C\|g-\bar{g}\|^2.
    \end{aligned}
  \end{equation*}
  Then applying Gronwall's inequality, one has
  \begin{equation}\label{step1}
    \|y_t\|^2+\|\nabla y\|^2 \leq e^{L(t-\tau)}(\|y_t(\tau)\|^2+\|\nabla y(\tau)\|^2) +C\int^t_\tau e^{L(t-s)}\|g-\bar{g}\|^2\mathrm{d}s.
  \end{equation}
  From Theorem 1.1 in \cite{Burq2008} and Theorem 1.1 in \cite{Blair2009}, it yields
  \begin{equation}
    \begin{aligned}
       &\|y_t\|_{L^4(\tau,t;L^{12}(\Omega))}
      \\
      \leq  &C_t\left\{\|y_t(\tau)\|+\|\nabla y(\tau)\|)+
      \|g-\bar{g}\|_{L^1(0,T;L^2(\Omega))}+\|f(u)-f(\bar{u})\|_{L^1(0,T;L^2(\Omega))}\right\}.
    \end{aligned}
  \end{equation}
From the proof of Lemma \ref{lemma_2} and proof of Corollary 4.3 in \cite{Kalantarov2016},
  \begin{equation}
    \begin{aligned}
      &\|f(u)-f(\bar{u})\|_{L^1(\tau,t;L^2)}\\
      \leq & \|y\|_{L^\infty(\tau,t;H^{(1-\frac{\kappa}{4})})} \left(1+\|u\|^{4}_{L^4(\tau,t;L^{12})}+\|\bar{u}\|^{4}_{L^4(\tau,t;L^{12})}\right)\\
      \leq & C(\|y_t(\tau)\|^2+\|\nabla y(\tau)\|^2) +C\int^t_\tau e^{L_1(t-s)}\|g-\bar{g}\|^2\mathrm{d}s.\\
    \end{aligned}
  \end{equation}
  Then we can end up with \eqref{diff}.
\end{proof}

 The discussion in Section 3 allows us to construct a family of exponential attractors for the Cauchy problem \eqref{dw}. The squeezing and smoothing properties ensure that the long-term behavior of the system is close to that of a finite-dimensional system. 
 The choice of absorbing sets and the construction method of exponential attractors in this paper ensure that the resulting exponential attractors possess semi-invariance.
 Moreover, the exponential attraction can be obtained from Lemma \ref{fuzhu1}. In conclusion, we can deduce the Hölder continuity of systems with respect to external force from  Lemma \ref{Holder_continuity} which we formulate as the following theorem:

\begin{theorem}
  Let \eqref{fg} and \eqref{sign} hold $g\in L^1(\tau,t;L^2)$ for all $\tau<t\in\mathbb{R}.$ Then for every external force $g,$ there is an exponential attractor $t\mapsto\mathcal{M}_g(t)$ of the dynamical process $U_g$ which satisfies the following properties:
  \begin{enumerate}
    \item The sets $\mathcal{M}_g$ are compact finite-dimensional subsets of $H^1_0(\Omega)\times L^2(\Omega)$:
    \begin{equation}
      \dim_F(\mathcal{M}_g(t))\leq C,~~t\in\mathbb{R},
    \end{equation}
    where $C$ is a positive constant.
    \item These sets are semi-invariant with respect to $U_g(t,\tau)$ and translation invariant with respect to time shifts:
        \begin{equation}
          U_g(t,\tau)\mathcal{M}_g(\tau)\subset\mathcal{M}_g(t), \mathcal{M}_g(t+s)=\mathcal{M}_{T_sg}(t),
        \end{equation}
        where $t,s,\tau\in\mathbb{R},\tau\leq t$ and $\{T_s,s\in\mathbb{R}\}$ is a group of temporal shifts, $(T_sg)(t):=g(t+s).$
    \item Theses sets possess a uniform exponential attraction property: there exist a positive constant $\alpha$ and a monotonic function $Q$ such that for every bounded set $B\subset H^1_0(\Omega)\times L^2(\Omega),$ we have
        \begin{equation}
          dist_{H^1_0\times L^2}\left(U_g(t+\tau,\tau)B,\mathcal{M}_g(t+\tau)\right)\leq Q(|B|)e^{-\alpha t}
        \end{equation}
        for all $\tau\in\mathbb{R}$ and $t\geq 0.$
    \item The map $g\mapsto\mathcal{M}_g(t)$ is H{\"o}lder continuous in the following sense:
         \begin{equation}
           dist_{H^1_0\times L^2}^{symm}(\mathcal{M}_{g_1}(t),\mathcal{M}_{g_2}(t))\leq C\left(\int^t_{-\infty}e^{\beta(t-s)}\|g_1(s)-g_2(s)\|^2_{L^2}\mathrm{d}x\right)^\vartheta,
         \end{equation}
         where $C,\beta$ and $\theta$ are positive constants and independent of $g_1,$ and $g_2.$
  \end{enumerate}
\end{theorem}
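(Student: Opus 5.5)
The plan is to reduce the statement to the abstract Theorem \ref{th1} by time discretisation, and then recover continuous time from the Lipschitz continuity of the process on bounded time intervals.

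\textbf{Step 1: choose the time step.} Fix $T>0$ large, to be chosen, and set $H_1=H^1_0\times L^2$ and $H=H^{1-\delta}\times H^{-\delta}$. The embedding $H_1\hookrightarrow H$ is compact on the bounded domain $\Omega$. Define the discrete process $U(n):=U_g((n+1)T,nT)$ and take $B(n)\equiv B$, the absorbing ball from Theorem 3.4 of \cite{Lidd2021}. This family has constant diameter, so it trivially grows at most sub-exponentially. After enlarging $B$ to $\bigcup_{t\ge t_0}U_g(t+\tau,\tau)B$ and taking its closure, we get $U(n)B(n)\subset B(n+1)$. By Theorem 3.5 of \cite{Lidd2021}, $R_0$ and the Strichartz bounds depend only on the constant in \eqref{fg} and on bounds for $g$, not on the specific $g$.

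\textbf{Step 2: verify $U(n)\in\mathbb{U}_{\mu,K}(B)$.} Split $U(n)=S(n)+C(n)$ as in \eqref{equ_contraction}--\eqref{equ_smoothing}. Lemma \ref{lemma_1} gives $\|S h_1-Sh_2\|_{H_1}\le c e^{-\alpha_1T/2}\|h_1-h_2\|_{H_1}$. Choosing $T$ so that this constant is at most $(1-\mu)/2$ with $\mu=1/2$ gives the contraction condition.

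For $C$, Lemma \ref{lemma_2} together with the proof of Lemma \ref{Holder_continuity} gives a bound of $\|Ch_1-Ch_2\|_{H^{1+\delta}\times H^\delta}$ by $K_T\|h_1-h_2\|_{H_1}$. This is smoothing, but it is measured against the $H_1$-norm, not the $H$-norm. To obtain the form demanded by Definition \ref{def1}, I would use the standard Chueshov--Lasiecka route. Interpolate, write the $H_1$-norm of $Ch_1-Ch_2$ as a compact seminorm $n(h_1-h_2)$ such as $\|u-\tilde u\|_{L^\infty(nT,(n+1)T;H^{1-\delta})}$, and then replace $H$ by the corresponding completion of a trajectory space (the method of $\ell$-trajectories). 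This adaptation is the step I expect to be the main obstacle: the smoothing estimate must control $C$ by a norm that is genuinely compact relative to $H_1$ on $B$, uniformly in $g$.

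\textbf{Step 3: apply Theorem \ref{th1} and pass to continuous time.} Theorem \ref{th1} yields discrete sets $M_g(n)$ with bounded fractal dimension, semi-invariance, and uniform exponential attraction of $B$. Define
\[
\mathcal{M}_g(t):=U_g(t,nT)M_g(n),\qquad t\in[nT,(n+1)T).
\]
Lemma \ref{Holder_continuity} (taking $g=\bar g$ and using the energy Gronwall bound) shows that $U_g(t,s)$ is Lipschitz on $B$ for $|t-s|\le T$, uniformly in the data. Hence finite dimension and compactness persist, and semi-invariance follows from the discrete semi-invariance. An arbitrary bounded set enters $B$ after a time depending on $|B|$, and this gives the monotone function $Q$. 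Translation invariance $\mathcal{M}_g(t+s)=\mathcal{M}_{T_sg}(t)$ holds because the construction depends only on $g$ through the process: one builds everything from $g$ restricted to past times relative to a base point, for instance by defining $\mathcal{M}_g(0)$ via the grid $\{kT\}$ and then setting $\mathcal{M}_g(t):=\mathcal{M}_{T_tg}(0)$. Semi-invariance has to be rechecked for this shifted definition.

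\textbf{Step 4: H\"older continuity in $g$.} Apply \eqref{con4}. By Lemma \ref{Holder_continuity} with equal initial data,
\[
\|U_{g_1}(n-l)-U_{g_2}(n-l)\|_{\mathbb{U}}\le Ce^{LT}\Big(\int_{(n-l)T}^{(n-l+1)T}\|g_1-g_2\|^2\,ds\Big)^{1/2}.
\]
The weighted supremum $\sup_l e^{-\beta(n-l)}(\cdot)^\vartheta$ is then bounded by $\big(\int_{-\infty}^t e^{-\beta'(t-s)}\|g_1-g_2\|^2ds\big)^{\vartheta/2}$, with the continuous-time shift absorbed by the Lipschitz bound from Step 3.
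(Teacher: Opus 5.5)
Your strategy matches the paper's. Its proof of this theorem is only an appeal to Theorem~\ref{th1} through Lemmas~\ref{lemma_1}, \ref{lemma_2}, \ref{Holder_continuity} and \ref{fuzhu1}, and your Steps 1, 3 and 4 supply bookkeeping it omits. However, you leave the one substantive hypothesis, $U(n)\in\mathbb{U}_{\mu,K}(B)$, unverified. You are right that Lemma~\ref{lemma_2}, which gives smoothing from $H_1$ into $H^{1+\delta}\times H^{\delta}$, is not of the form required by Definition~\ref{def1}; the paper passes over this point.

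Your remedy does not produce that form. A quantity like $\|u-\tilde u\|_{L^\infty(nT,(n+1)T;H^{1-\delta})}$ is a pseudometric on pairs of initial data, not a norm of $h_1-h_2$. So it has no ``completion'' that yields a Banach space $H$. The proof of Theorem~\ref{th1} really does use a norm: it rescales an $H$-cover of the unit $H_1$-ball around each centre. The $\ell$-trajectory method instead replaces the phase space $H_1$ itself by a trajectory space. At that point Theorem~\ref{th1} is no longer being applied as stated, and (1)--(4) would have to be transported back.

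What is actually missing is a Lipschitz estimate in the weaker energy space $H=H^{1-\delta}\times H^{-\delta}$ with $\delta=\kappa/4$. Use $|f(u)-f(\tilde u)|\le C(1+|u|^{4-\kappa}+|\tilde u|^{4-\kappa})|u-\tilde u|$, as in the proof of Lemma~\ref{Holder_continuity}, together with the embeddings $H^{1-\delta}\subset L^{12/(2+\kappa)}$ and $L^{12/(6+\kappa)}\subset H^{-\delta}$ and H\"older's inequality. This gives
\[
\|f(u)-f(\tilde u)\|_{H^{-\delta}}+\|f(u)-f(\tilde u)\|_{L^2}\le C\big(1+\|u\|_{L^{12}}^{4-\kappa}+\|\tilde u\|_{L^{12}}^{4-\kappa}\big)\|u-\tilde u\|_{H^{1-\delta}} .
\]
\begin{itemize}
\item The $H^{-\delta}$ bound, the energy identity for $u-\tilde u$ in $H^{1-\delta}\times H^{-\delta}$, Gronwall, and the Strichartz bound (uniform on $B$) give $\|u-\tilde u\|_{L^\infty(nT,(n+1)T;H^{1-\delta})}\le C_T\|h_1-h_2\|_H$.
\item The $L^2$ bound and Duhamel's formula for $w-\tilde w$ (zero data) then give $\|Ch_1-Ch_2\|_{H_1}\le K_T\|h_1-h_2\|_H$. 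This is exactly Definition~\ref{def1}.
\end{itemize}
The sub-quintic room $\kappa>0$ is what makes $\delta>0$ available. Alternatively, the covering step of Theorem~\ref{th1} works verbatim if $C$ maps $H_1$-balls of radius $r$ into $H^{1+\delta}\times H^{\delta}$-balls of radius $Kr$, which is what Lemma~\ref{lemma_2} gives. That, however, modifies the abstract theorem rather than applying it.

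Translation invariance in Step 3 fails exactly where you say it must be rechecked. With $\mathcal{M}_g(t):=\mathcal{M}_{T_tg}(0)$, the sets $\mathcal{M}_g(\tau)$ and $\mathcal{M}_g(t)$ are built on the grids $\tau+T\mathbb{Z}$ and $t+T\mathbb{Z}$. These agree only if $t-\tau\in T\mathbb{Z}$. Otherwise \eqref{con2} gives no relation between the two sets, and $U_g(t,\tau)\mathcal{M}_g(\tau)\subset\mathcal{M}_g(t)$ fails in general. Your first definition, $U_g(t,nT)M_g(n)$, is semi-invariant but only equivariant under shifts in $T\mathbb{Z}$.

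To get both properties, take the union over all phases, $\mathcal{M}_g(t):=\bigcup_{\theta\in[0,T]}U_g(t,t-\theta)\mathcal{M}_{T_{t-\theta}g}(0)$; this is semi-invariant by \eqref{con2} applied on each grid. Compactness and the dimension bound for this uncountable union then require the discrete attractor to depend H\"older continuously on the phase. Via \eqref{con4}, that needs a uniform estimate $\sup_{h\in B}\|U_g(s+T,s)h-U_g(s'+T,s')h\|_{H_1}\le C|s-s'|^{\gamma}$. This fails on sets merely bounded in $H_1$ and requires an absorbing set bounded in $H^{1+\delta}\times H^{\delta}$. The paper does not treat this point either.

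Finally, Step 1 tacitly needs two things beyond what the statement gives:
\begin{itemize}
\item $g$ translation bounded, say $\sup_t\int_t^{t+1}\|g\|^2\,ds<\infty$, since $g\in L^1_{\mathrm{loc}}(\mathbb{R};L^2)$ alone gives no $g$-uniform $R_0$;
\item one positively invariant $B$ shared by $g_1$ and $g_2$ when you apply \eqref{con4}.
\end{itemize}
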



The results of this paper also indicate that exponential attractors are more robust than global attractors to some extent, as the stability results obtained in this paper cannot be achieved for global attractors under the same conditions.
In this section, we use the weakly damped  hyperbolic equation to illustrate the application of Theorem \ref{th1}. However, Theorem \ref{th1} is also applicable to other dissipative non-autonomous systems under certain conditions, including reaction-diffusion equations and the 2D Navier-Stokes equations and other dissipative equations.

\section*{Acknowledgments}
This work was supported by NSFC (Grant No. 12201142 and No. 12301297).


\end{document}